\documentclass[a4paper,reqno,final,11pt]{amsart}
\usepackage{amssymb}
\usepackage{amsmath}
\usepackage{amsfonts}
\usepackage{hyperref}
\usepackage{graphicx}

\setlength{\hoffset}{-15mm}
\setlength{\textwidth}{157mm}

\newtheorem{theorem}{Theorem}
\theoremstyle{plain}
\newtheorem{corollary}{Corollary}

\newtheorem{remark}{Remark}
\numberwithin{equation}{section}

\email{amsengouga@gmail.com,abdelmouhcene.sengouga@univ-msila.dz}
\subjclass[2010]{35L05, 93B05, 93B07}
\keywords{Axially moving strings, Fourier series, energy estimates,
boundary observability.}

\begin{document}
\title[Free vibrations of axially moving strings]{Free vibrations of axially
moving strings: Energy estimates and boundary observability}
\author[S. Ghenimi]{Seyf Eddine Ghenimi}
\author[A. Sengouga]{Abdelmouhcene Sengouga}
\address[Seyf Eddine Ghenimi, Abdelmouhcene Sengouga]{ Laboratory of Functional Analysis and Geometry of Spaces\\
Department of mathematics\\
Faculty of Mathematics and Computer Sciences\\
University of M'sila\\
28000 M'sila, Algeria.}
\date{\today}
\maketitle

\begin{abstract}
We study the small vibrations of axially moving strings described by a wave
equation in an interval with two endpoints moving in the same direction with
a constant speed. The solution is expressed by a series formula where the
coefficients are explicitly computed in function of the initial data. We
also define an energy expression for the solution that is conserved in time.
Then, we establish boundary observability inequalities with explicit
constants.
\end{abstract}


\section{Introduction}

The present work deals with small transverse vibrations of an infinite
string moving axially with a constant speed. Two fixed supports, distanced
by $L$ as represented in Figure \ref{fig0}, prevent transversal
displacements of the string at the supporting points while the axial motion
remains unaffected. 
\begin{figure}[tbph]
\centering
\includegraphics[width=0.61\textwidth]{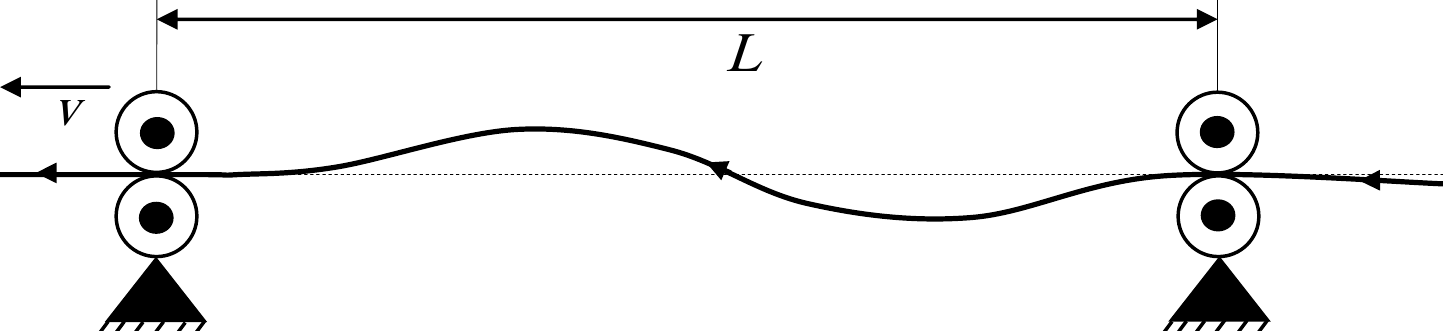}
\caption{A string travelling to the left with a speed $v$.}
\label{fig0}
\end{figure}

We introduce a coordinate system $(x,t)$, attached to the travelling string,
where $x$ coincides with the rest state axis of the string and $t$ denotes
the time. We denote transverse displacement of the string by $\phi (x,t)$
and we choose the position of the left support to coincide with $x=0$.
Assuming that the string travels to the left with a scalar speed $v$, the
positions of the left and right supports are $x=vt$ and $x=L+vt$ for $t\geq
0 $, respectively. If we assume that the string travels to the right then it
suffices to change $v$ by $-v$ in the remainder of this paper.

For $T>0,$ we denote the interval 
\begin{equation*}
\mathbf{I}_{t}:=\left( vt,L+vt\right) ,\text{ \ for\ }t\in \left( 0,T\right)
.
\end{equation*}%
A simplified model describing the free small transverse vibrations of this
string is the following wave equation 
\begin{equation}
\left\{ 
\begin{array}{ll}
\phi _{tt}-\phi _{xx}=0,\ \smallskip & \text{for }x\in \mathbf{I}_{t}\text{
and }t\in \left( 0,T\right) , \\ 
\phi \left( vt,t\right) =\phi \left( L+vt,t\right) =0,\smallskip & \text{for 
}t\in \left( 0,T\right) , \\ 
\phi (x,0)=\phi ^{0}\left( x\right) \text{, \ \ }\phi _{t}\left( x,0\right)
=\phi ^{1}\left( x\right) ,\text{ \ } & \text{for }x\in \mathbf{I}_{0},%
\end{array}%
\right.  \tag{WP}  \label{wave}
\end{equation}%
where the subscripts $t$ and $x$ stand for the derivatives in time and space
variables respectively, $\phi ^{0}$ is the initial shape of the string and $%
\phi ^{1}$ is its initial transverse speed. We assume that the speed $v$ is
strictly less then the speed of propagation of the wave (here normalized to $%
c=1$), i.e. 
\begin{equation}
0<v<1.  \label{tlike}
\end{equation}%
If $v\geq 1,$ then the problem is ill-posed, see for instance \cite{RaCa1995}%
.

The wave equation formulated above is a simple model to represent several
mechanical systems such as plastic films, magnetic tapes, elevator cables,
textile and fibre winding, see for example \cite{Chen2005,BBJT2020,HoPh2019}%
. This model can be dated back to Skutch \cite{Skut1897}, its simplicity is
only apparent and we should mention that the method of separation of
variables cannot be applied to this problem. Miranker's work \cite{Mira1960}
is one of the early influencing papers on the topic of axially moving media.
He proposed two approaches to solve Problem (\ref{wave}). The first one is
to "freeze" the space interval by formulating the problem in the interval $%
\left( 0,L\right) $. Thus, introducing the variables $\eta =x-vt$ and $\tau
=t$, the first equation in (\ref{wave}) becomes%
\begin{equation}
\phi _{\tau \tau }-2v\phi _{\eta \tau }-\left( 1-v^{2}\right) \phi _{\eta
\eta }=0,\text{ \ for }\eta \in \left( 0,L\right) ,\text{ }\tau >0.
\label{freez}
\end{equation}%
The obtained problem is more familiar and the vast majority of the
literature on travelling strings follows this approach. Some important
results in this direction are given by Wickert and Mote \cite{WiMo1990}
where the authors write (\ref{freez}) as a first-order differential equation
with matrix differential operators (a state space formulation) and obtained
a closed form representation of the solution for arbitrary initial
conditions. There is also other methods to solve (\ref{freez}), for instance
a solution by the Laplace transform method is proposed in \cite{vaPo2005}.
The solution can also be constructed using the characteristic method, see
for instance \cite{RaCa1995,CLFL2017}.

The approach of Miranker \cite{Mira1960} is to solve (\ref{wave}), i.e. keep
the space interval depending on time. He obtained a closed form of the
solution by a series formulas (See page 39 in \cite{Mira1960}). After few
rearrangements, his formulas can be rewritten as%
\begin{equation}
\phi (x,t)=\sum_{n\in \mathbb{\mathbb{Z} }^{\ast }}c_{n}\left( e^{n\pi
i\left( 1-v\right) \left( t+x\right) /L}-e^{n\pi i\left( 1+v\right) \left(
t-x\right) /L}\right) ,\text{ for }x\in \mathbf{I}_{t}\text{ and }t\in
\left( 0,T\right) .  \label{exact0}
\end{equation}%
Despite the utility of such a formula for numerical and asymptotic
approaches, it remained underexploited in the literature related to axially
moving strings.

Since Miranker was not explicit on how to compute the coefficients $c_{n}$,
we give in the paper at hands a method to compute each $c_{n}$ in function
of the initial data $\phi ^{0}\ $and $\phi ^{1},$ see Theorem \ref{thexist1}
in the next section. The idea is inspired from \cite{Seng2020} where the
second author obtained the exact solution of strings with two linearly
moving endpoints at different speeds. Similar techniques were used in \cite%
{Bala1961,Seng2018} for a string with one moving endpoint. Each problem in 
\cite{Seng2020,Bala1961,Seng2018}, is set in an interval expanding with time
(in the inclusion sense) and the solution is presented by a series
containing a type of functions different from those in (\ref{exact0}). Thus,
the results of \cite{Seng2020,Seng2018} in particular do not apply to the
present problem (\ref{wave}).

In this work, we show that the series formulas (\ref{exact0}) can be
manipulated to establish the following results:

\begin{itemize}
\item \emph{A conserved quantity.} The functional\footnote{%
Here and in the sequel, the subscript $v$ is used to emphasize the
dependence on the speed $v$.}%
\begin{equation}
\mathcal{E}_{v}\left( t\right) =\frac{1}{2}\int_{vt}^{L+vt}\left( \phi
_{t}+v\phi _{x}\right) ^{2}+\left( 1-v^{2}\right) \phi _{x}^{2}dx,\ \ \ 
\text{for }t\geq 0,  \label{E}
\end{equation}%
depending on $L,t,v$ and the solution of $\left( \ref{wave}\right) ,$ is
conserved in time. We give two different proofs for this fact, see Theorem %
\ref{th1}. Note that $\phi _{t}+v\phi _{x}=\frac{d}{dt}\left( \phi \left(
x+vt,t\right) \right) $ is the total (called also the material) derivative.
Under the assumption (\ref{tlike}), this functional is positive-definite and
we will call it the "energy" of the solution $\phi $. Although there are
many expressions of energy for axially moving strings, see for instance \cite%
{RRWM1998,WiMo1989}, we could not find the definition (\ref{E}) in the
literature.
\end{itemize}

\begin{itemize}
\item \emph{Exact boundary observability.}

\begin{itemize}
\item The wave equation (\ref{wave}) is exactly observable at any endpoint $%
x=x_{b}+vt,$ where $x_{b}=0$ or $x_{b}=L$. Due to the finite speed of
propagation, the time of observability is expected to be positive and
depends on the initial length $L$ and the speed $v$. We show that this time
is exactly 
\begin{equation*}
T_{v}:=2L/(1-v^{2}),
\end{equation*}
see Theorems \ref{thobs1}.

\item If we observe both endpoints, i.e. for $x =vt$ and $x =L+vt,$ the time
of observability is reduced to 
\begin{equation*}
\tilde{T}_{v}:=L/(1-v),
\end{equation*}
see Theorem \ref{thobs2}.
\end{itemize}
\end{itemize}

Although the problem considered here is linear and extensively studied, the
application of Fourier series method to establish the above stated results
is new to the best of our knowledge. Let us also note that letting $%
v\rightarrow 0$ in the above results, we recover some known facts for the
wave equation in non-travelling intervals \cite{Lion1988,KoLo2005}. In
particular, $\mathcal{E}_{0}\left( t\right) =\frac{1}{2}\int_{0}^{L}\phi
_{t}^{2}+\phi _{x}^{2}dx\ $is known to be conserved and we get $T_{0}=2L,%
\tilde{T}_{0}=L$ as sharp values for boundary observability time.

After the present introduction, we derive an expression for the coefficients
of the series formula (\ref{exact0}). In section 3, we show that the energy $%
\mathcal{E}_{v}$ is conserved in time. The boundary observability results at
one endpoint and at both endpoints are addressed in the last section.

\section{Computing the coefficients of the series}

To simplify some formulas, we introduce the notation%
\begin{equation*}
\gamma _{v}:=\frac{1+v}{1-v},\text{ \ \ \ \ }L_{1}:=\frac{1-v}{1+v}L\ \ 
\text{and \ }L_{2}:=\frac{2}{1-v}L
\end{equation*}%
since these constants will appear frequently in the sequel. Note that 
\begin{equation*}
1<\gamma _{v}<+\infty \text{ \ \ and \ \ }0<L_{1}<L<L_{2}/2,\text{ \ for }%
0<v<1.
\end{equation*}%
For every initial data%
\begin{equation}
\phi ^{0}\in H_{0}^{1}\left( \mathbf{I}_{0}\right) ,\text{ \ }\phi ^{1}\in
L^{2}\left( \mathbf{I}_{0}\right) ,  \label{ic}
\end{equation}%
we already know that if (\ref{tlike}) holds the solution of Problem (\ref%
{wave})\emph{\ }exists and satisfies%
\begin{equation}
\phi \in C\left( [0,T];H_{0}^{1}\left( \mathbf{I}_{t}\right) \right) \text{\
\ \ and \ \ }\phi _{t}\in C\left( [0,T];L^{2}\left( \mathbf{I}_{t}\right)
\right) ,  \label{solreg}
\end{equation}%
see for instance \cite{DaZo1990,BaCh1981}. Moreover, an easy computation
shows that the solution $\phi $ given by (\ref{exact0}) satisfies the
periodicity relation 
\begin{equation}
\phi (x+vT_{v},t+T_{v})=\phi (x,t),  \label{T-period}
\end{equation}%
i.e., after a time $T_{v}=2L/\left( 1-v^{2}\right) $ the string travels a
distance $vT_{v}$ and return to its original form at time $t$.

\subsection{Coefficients expressions}

\begin{theorem}
\label{thexist1}Under the assumptions (\ref{tlike}) and (\ref{ic}), the
solution of\ Problem (\ref{wave}) is\emph{\ }given by the series (\ref%
{exact0}) where the coefficients $c_{n}\in \mathbb{C}$ are given by any of
the two following formulas 
\begin{align}
c_{n}& =\frac{1}{4n\pi i}\int_{0}^{L_{2}}\left( \tilde{\phi}_{x}^{0}+\tilde{%
\phi}^{1}\right) e^{-n\pi i\left( 1-v\right) x/L}dx,  \label{cn+} \\
& =\frac{1}{4n\pi i}\int_{-L_{1}}^{L}\left( \tilde{\phi}_{x}^{0}-\tilde{\phi}%
^{1}\right) e^{n\pi i\left( 1+v\right) x/L}dx\text{, \ \ for }n\in \mathbb{\ 
\mathbb{Z}}^{\ast },  \label{cn-}
\end{align}%
where\emph{\ }$\tilde{\phi}_{x}^{0}$ and $\tilde{\phi}^{1}$ are extensions
of the initial data $\phi ^{0}$and\ $\phi ^{1}$ on the interval $\left(
-L_{1},L_{2}\right) $ given below by (\ref{phi0x+}) and (\ref{phi1+})\emph{\ 
}respectively.
\end{theorem}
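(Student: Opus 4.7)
The plan is to treat Problem (\ref{wave}) via a d'Alembert-type decomposition $\phi(x,t)=f(t+x)+g(t-x)$, exploit the two moving boundary conditions to express $g$ in terms of $f$ and force periodicity of $f$, then Fourier-expand $f$ on its period and read off the coefficients $c_n$ from the initial data.

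First, I would substitute the decomposition into the boundary condition at $x=vt$ to obtain $g(s)=-f(\gamma_v s)$, and into the condition at $x=L+vt$ to obtain $f(y+L_2)=f(y)$, so that $f$ is $L_2$-periodic and $g$ is entirely determined by $f$. Expanding $f$ in complex Fourier series on $[0,L_2]$ gives $f(y)=\sum_{n\in\mathbb{Z}}c_n e^{n\pi i(1-v)y/L}$, the fundamental frequency $2\pi/L_2$ becoming $\pi(1-v)/L$. Substituting into $\phi=f(t+x)-f(\gamma_v(t-x))$ produces exactly (\ref{exact0}), the constant mode $c_0$ cancelling between the two copies of $f$, which justifies restricting to $n\in\mathbb{Z}^{\ast}$.

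Next, I would use the initial data to determine $f'$ on a full period. Differentiating $\phi(x,0)=f(x)+g(-x)=\phi^0(x)$ and combining with $\phi_t(x,0)=f'(x)+g'(-x)=\phi^1(x)$ on $(0,L)$ yields $2f'(x)=\phi^0_x(x)+\phi^1(x)$ and $-2g'(-x)=\phi^0_x(x)-\phi^1(x)$ on $(0,L)$, so that $g'$ is known on $(-L,0)$. The relation $g'(y)=-\gamma_v f'(\gamma_v y)$, coupled with the $L_2$-periodicity of $f$ and the identity $L_2-\gamma_v L=L$ (which identifies the image interval $(-\gamma_v L,0)$ modulo $L_2$ with $(L,L_2)$), then transports the known values of $g'$ on $(-L,0)$ into explicit values for $f'$ on $(L,L_2)$. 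The resulting function on $(0,L_2)$ is exactly the extension $\tilde{\phi}^{0}_{x}+\tilde{\phi}^{1}$ announced in (\ref{phi0x+})--(\ref{phi1+}), and (\ref{cn+}) follows by applying the standard Fourier coefficient formula to $f'=\tfrac{1}{2}(\tilde{\phi}^{0}_{x}+\tilde{\phi}^{1})$ on $(0,L_2)$.

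For (\ref{cn-}) I would run the symmetric argument on the $g$ side: $g$ is $(L+L_1)$-periodic with Fourier coefficients equal to $-c_n$, its derivative $g'$ is already known on $(-L,0)$ from the initial data, and the relation $f(y)=-g(y/\gamma_v)$ together with the periodicity of $g$ allows one to extend $g'$ from $(-L,0)$ to the full period $(-L_1,L)$ in terms of $\phi^0$ and $\phi^1$, yielding (\ref{cn-}). The principal technical obstacle is to justify these formal manipulations at the regularity stated in (\ref{ic}): one must verify that the constructed extensions belong to $L^2$ of their respective intervals, that the resulting series converges in the sense required by (\ref{solreg}) and recovers the initial data, and that the two integral representations of $c_n$ are consistent — this last point being essentially automatic since both formulas come from the single decomposition $(f,g)$.
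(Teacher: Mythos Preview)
Your proposal is correct and takes a genuinely different route from the paper. The paper accepts Miranker's series (\ref{exact0}) as an ansatz, defines the extensions (\ref{phi0x+})--(\ref{phi1+}) \emph{a priori}, substitutes the series into them, and verifies by direct computation that on the extended intervals the combinations $\tilde{\phi}_{x}\pm\tilde{\phi}_{t}$ collapse to a single exponential series; (\ref{cn+}) and (\ref{cn-}) then drop out by orthonormality of the exponentials and setting $t=0$. You instead \emph{derive} the series from the d'Alembert decomposition $\phi=f(t+x)+g(t-x)$: the boundary conditions force $g(s)=-f(\gamma_{v}s)$ and the $L_{2}$-periodicity of $f$, which both explains the shape of (\ref{exact0}) and makes the extension formulas appear naturally as the expressions for $2f'$ and $-2g'(-\cdot)$ on a full period in terms of the initial data. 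Your approach is more self-contained and illuminates why the particular extensions (\ref{phi0x+})--(\ref{phi1+}) are the right ones; the paper's approach is shorter once the ansatz is granted and has the side benefit that the Parseval identities of Corollary~\ref{coroSll} (needed later for the energy) are immediate byproducts of the same computation, though your argument yields them just as easily from $\|f'\|_{L^{2}(0,L_{2})}^{2}$. One small wording issue: when you say ``extend $g'$ from $(-L,0)$ to the full period $(-L_{1},L)$'', note that $-L<-L_{1}$, so $(-L,0)$ already overshoots on the left; what you actually need (and what your relation $g'(s)=-\gamma_{v}f'(\gamma_{v}s)$ provides) is $g'$ on $(0,L_{1})$, using $f'$ on $(0,L)$, while the piece on $(-L,-L_{1})$ is identified with $(L_{1},L)$ by periodicity. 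The regularity justification you flag is handled in the paper exactly as you suggest, by invoking (\ref{solreg}).
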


Before proceeding to the proof, let us describe how to extend the function $%
\phi ,\ $defined only on $\mathbf{I}_{t}=\left( vt,L+vt\right) ,$ to the
intervals $\left( -L_{1}+vt,vt\right) $ and $\left( L+vt,L_{2}+vt\right) $.
On one hand, we set%
\begin{equation}
\tilde{\phi}(x,t)=\left\{ 
\begin{array}{ll}
-\phi \left( \gamma _{v}\left( vt-x\right) +vt,t\right) , & \text{if }x\in
\left( -L_{1}+vt,vt\right) ,\smallskip \\ 
\phi \left( x,t\right) , & \text{if }x\in \left( vt,L+vt\right) ,\smallskip
\\ 
-\phi \left( \frac{1}{\gamma _{v}}\left( vt-x\right) +\frac{2L}{1+v}%
+vt,t\right) ,\text{ \ \ } & \text{if }x\in \left( L+vt,L_{2}+vt\right) .%
\end{array}%
\right.  \label{phi0+}
\end{equation}%
The obtained function is well defined since the first variable of $\phi $
remains in the interval $\left( vt,L+vt\right) $. In particular, $\tilde{\phi%
}(vt,t)=\tilde{\phi}(L+vt,t)=0,\ $hence the homogeneous boundary conditions
at $x=vt$ and $x=L+vt$ remain satisfied, for every $t\geq 0$.

\begin{figure}[tbph]
\centering 
\includegraphics[width=0.71\textwidth]{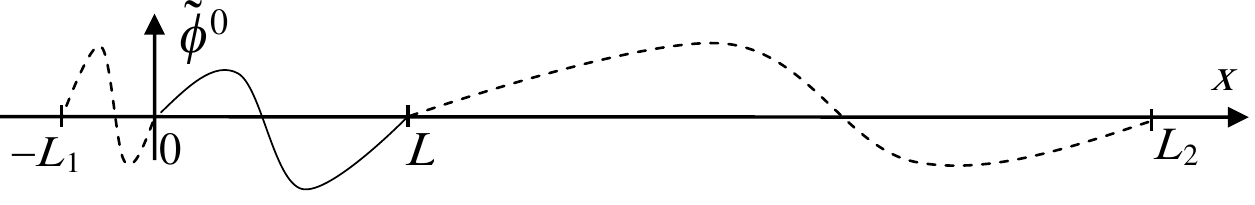}
\caption{Example of the extension of an initial data $\protect\phi ^{0}$.}
\label{fig1a}
\end{figure}

\begin{remark}
If $v=0$, then $L_{1}=L$ and $L_{2}=2L$. In this case, the functions $\tilde{%
\phi}$ and $\tilde{\phi}_{t}$ are odd on the intervals $\left( -L,L\right) $
and $\left( 0,2L\right) $ with respect to the middle of each interval. The
extension $\tilde{\phi}_{x}$ is an even function on these intervals.
\end{remark}

Taking the derivative of (\ref{phi0+}) with respect to $x$, we obtain$,$%
\begin{equation}
\tilde{\phi}_{x}(x,t)=\left\{ 
\begin{array}{ll}
\gamma _{v}\phi _{x}\left( \gamma _{v}\left( vt-x\right) +vt,t\right) , & 
\text{if }x\in \left( -L_{1}+vt,vt\right) ,\smallskip \\ 
\phi _{x}\left( x,t\right) , & \text{if }x\in \left( vt,L+vt\right)
,\smallskip \\ 
\frac{1}{\gamma _{v}}\phi _{x}\left( \frac{1}{\gamma _{v}}\left( vt-x\right)
+\frac{2L}{1+v}+vt,t\right) ,\text{ \ \ \ \ } & \text{if }x\in \left(
L+vt,L_{2}+vt\right) .%
\end{array}%
\right.  \label{phi0x+}
\end{equation}%
On the other hand, $\tilde{\phi}_{t}(x,t)$ is extended as follows%
\begin{equation}
\tilde{\phi}_{t}(x,t)=\left\{ 
\begin{array}{ll}
-\gamma _{v}\phi _{t}\left( \gamma _{v}\left( vt-x\right) +vt,t\right) , & 
\text{if }x\in \left( -L_{1}+vt,vt\right) ,\smallskip \\ 
\phi _{t}\left( x,t\right) , & \text{if }x\in \left( vt,L+vt\right)
,\smallskip \\ 
\frac{-1}{\gamma _{v}}\phi _{t}\left( \frac{1}{\gamma _{v}}\left(
vt-x\right) +\frac{2L}{1+v}+vt,t\right) ,\text{ \ \ \ \ } & \text{if }x\in
\left( L+vt,L_{2}+vt\right) .%
\end{array}%
\right.  \label{phi1+}
\end{equation}

\begin{remark}
In Figure \ref{fig3}, let $(x_{1},t_{1})$ be the intersection of the two
characteristic starting from the initial endpoints $x=0$ and $x=L$, after
one reflection on the boundaries. We can check that, the two backward
characteristic lines from $(x_{1},t_{1})$ intersect the $x-$axis precisely
at $x=-L_{1}$ and $x=L_{2}$.
\end{remark}

\begin{figure}[bth]
\centering 
\includegraphics[width=0.67\textwidth,height=51mm]{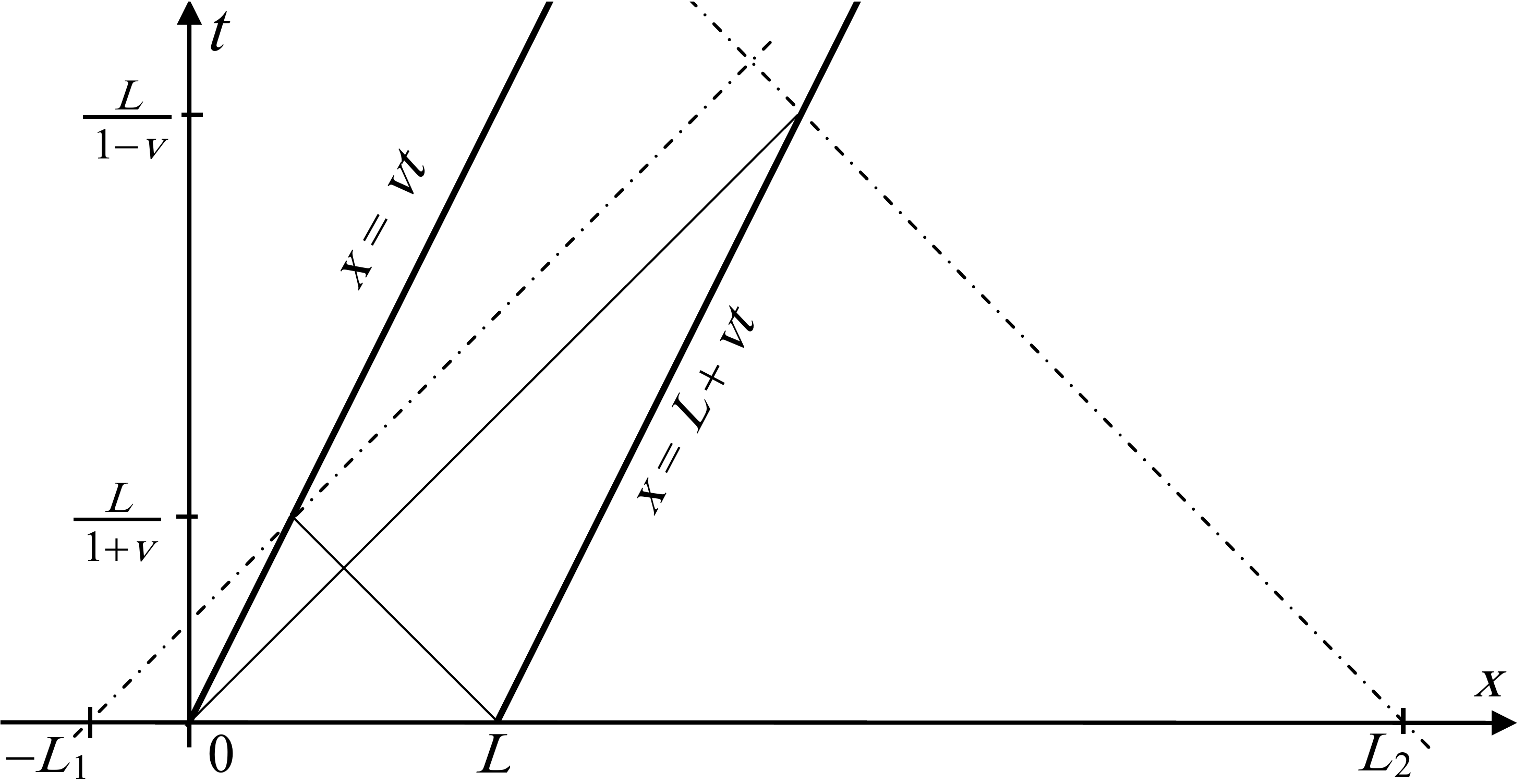}
\caption{Relation between $L_1,L_2$ and some characteristics of wave
propagation.}
\label{fig3}
\end{figure}

Now we are ready to show the coefficients formulas.

\begin{proof}[Proof of Theorem \protect\ref{thexist1}]
Thanks to (\ref{solreg}), we can derive term by term the series (\ref{exact0}%
), it comes that%
\begin{align}
\phi _{x}(x,t)& =\frac{\pi i}{L}\sum_{n\in \mathbb{\mathbb{Z}}^{\ast
}}nc_{n}\left( \left( 1-v\right) e^{n\pi i\left( 1-v\right) \left(
t+x\right) /L}+\left( 1+v\right) e^{n\pi i\left( 1+v\right) \left(
t-x\right) /L}\right) ,\smallskip  \label{ph x} \\
\phi _{t}(x,t)& =\frac{\pi i}{L}\sum_{n\in \mathbb{\mathbb{Z}}^{\ast
}}nc_{n}\left( \left( 1-v\right) e^{n\pi i\left( 1-v\right) \left(
t+x\right) /L}-\left( 1+v\right) e^{n\pi i\left( 1+v\right) \left(
t-x\right) /L}\right) ,  \label{ph t}
\end{align}%
where $t\geq 0$, $x\in \left( vt,L+vt\right) $. Combining this, with (\ref%
{phi0x+}) and (\ref{phi1+}), the extensions $\tilde{\phi}_{x}$ and $\tilde{%
\phi}_{t}$ on the interval $\left( vt,L_{2}+vt\right) $ are given by%
\begin{equation}
\tilde{\phi}_{x}(x,t)=\left\{ 
\begin{array}{ll}
\displaystyle\frac{\pi i}{L}\sum_{n\in \mathbb{\mathbb{Z}}^{\ast
}}nc_{n}\left( \left( 1-v\right) e^{n\pi i\left( 1-v\right) \left(
t+x\right) /L}+\left( 1+v\right) e^{n\pi i\left( 1+v\right) \left(
t-x\right) /L}\right) , & \text{ if\ }x\in \left( vt,L+vt\right) \smallskip ,
\\ 
\displaystyle\frac{\pi i}{\gamma _{v}L}\sum_{n\in \mathbb{\mathbb{Z}}^{\ast
}}nc_{n}\left\{ \left( 1-v\right) e^{\frac{n\pi i\left( 1-v\right) }{L}%
\left( \left( 1+v\right) t+\frac{vt-x}{\gamma _{v}}+\frac{2L}{1+v}\right)
}\right. , &  \\ 
\multicolumn{1}{r}{\left. +\left( 1+v\right) e^{\frac{n\pi i\left(
1+v\right) }{L}\left( \left( 1-v\right) t-\frac{vt-x}{\gamma _{v}}-\frac{2L}{%
1+v}\right) }\right\}} & \multicolumn{1}{r}{\text{ if\ }x\in \left(
L+vt,L_{2}+vt\right) ,}%
\end{array}%
\right.  \label{phx}
\end{equation}

\begin{equation}
\tilde{\phi}_{t}(x,t)=\left\{ 
\begin{array}{ll}
\displaystyle\frac{\pi i}{L}\sum_{n\in \mathbb{\mathbb{Z}}^{\ast
}}nc_{n}\left( \left( 1-v\right) e^{n\pi i\left( 1-v\right) \left(
t+x\right) /L}-\left( 1+v\right) e^{n\pi i\left( 1+v\right) \left(
t-x\right) /L}\right) , & \text{ if }x\in \left( vt,L+vt\right) \smallskip ,
\\ 
\displaystyle\frac{-\pi i}{\gamma _{v}L}\sum_{n\in \mathbb{\mathbb{Z}}^{\ast
}}nc_{n}\left\{ \left( 1-v\right) e^{\frac{n\pi i\left( 1-v\right) }{L}%
\left( \left( 1+v\right) t+\frac{vt-x}{\gamma _{v}}+\frac{2L}{1+v}\right)
}\right. , &  \\ 
\multicolumn{1}{r}{\left. -\left( 1+v\right) e^{\frac{n\pi i\left(
1+v\right) }{L}\left( \left( 1-v\right) t-\frac{vt-x}{\gamma _{v}}-\frac{2L}{%
1+v}\right) }\right\}} & \multicolumn{1}{r}{\text{\ if }x\in \left(
L+vt,L_{2}+vt\right) ,}%
\end{array}%
\right.  \label{pht}
\end{equation}

Taking the sum of (\ref{phx}) and (\ref{pht}) on the interval $\left(
vt,L_{2}+vt\right) $, we get%
\begin{equation*}
\tilde{\phi}_{x}+\tilde{\phi}_{t}=\left\{ 
\begin{array}{ll}
\displaystyle\frac{2\pi i}{L}\left( 1-v\right) \sum_{n\in \mathbb{\mathbb{Z}}%
^{\ast }}nc_{n}e^{n\pi i\left( 1-v\right) \left( t+x\right) /L}, & x\in
\left( vt,L+vt\right) ,\medskip \\ 
\displaystyle\frac{2\pi i}{\gamma _{v}L}\left( 1+v\right) \sum_{n\in \mathbb{%
\mathbb{Z}}^{\ast }}nc_{n}e^{\frac{n\pi i\left( 1+v\right) }{L}\left( \left(
1-v\right) t-\frac{vt-x}{\gamma _{v}}-\frac{2L}{1+v}\right) }, & x\in \left(
L+vt,L_{2}+vt\right) .%
\end{array}%
\right.
\end{equation*}%
Since $e^{\frac{n\pi i\left( 1+v\right) }{L}\left( \left( 1-v\right) t-\frac{%
vt-x}{\gamma _{v}}-\frac{2L}{1+v}\right) }=e^{n\pi i\left( 1-v\right) \left(
t+x\right) /L}$, we get the same expression on the two sub-intervals, i.e. 
\begin{equation}
\tilde{\phi}_{x}+\tilde{\phi}_{t}=\frac{2\pi i}{L}\left( 1-v\right)
\sum_{n\in \mathbb{\mathbb{Z}}^{\ast }}nc_{n}e^{n\pi i\left( 1-v\right)
\left( t+x\right) /L},\text{ \ \ for}\ \ x\in \left( vt,L_{2}+vt\right) .
\label{29}
\end{equation}%
Taking into account that $\left\{ \sqrt{\frac{1-v}{2L}}e^{n\pi i\left(
1-v\right) \left( t+x\right) /L}\right\} _{n\in \mathbb{\mathbb{Z}}}$ is an
orthonormal basis for $L^{2}\left( vt,L_{2}+vt\right) $, for every $t\geq 0$%
, we rewrite (\ref{29}) as 
\begin{equation}
\frac{1}{4\pi i}\sqrt{\frac{2L}{1-v}}\left( \tilde{\phi}_{x}+\tilde{\phi}%
_{t}\right) =\sum_{n\in \mathbb{\mathbb{Z}}^{\ast }}nc_{n}\sqrt{\frac{1-v}{2L%
}}e^{n\pi i\left( 1-v\right) \left( t+x\right) /L},  \label{30}
\end{equation}%
for $x\in \left( vt,L_{2}+vt\right) $. This means that $nc_{n}$ is the $%
n^{th}$ coefficient of the function%
\begin{equation}
\frac{1}{4\pi i}\sqrt{\frac{2L}{1-v}}\left( \tilde{\phi}_{x}+\tilde{\phi}%
_{t}\right) \in L^{2}\left( vt,L_{2}+vt\right) .  \label{parsev+}
\end{equation}%
By consequence,%
\begin{equation}
nc_{n}=\frac{1}{4\pi i}\int_{vt}^{L_{2}+vt}\left( \tilde{\phi}_{x}+\tilde{%
\phi}_{t}\right) e^{-n\pi i\left( 1-v\right) \left( t+x\right) /L}dx\text{,
\ \ \ for\ }n\in \mathbb{\mathbb{Z}}^{\ast }  \label{31-}
\end{equation}%
and (\ref{cn+}) holds as claimed for $t=0$.

The same argument can be carried out on the interval $\left(
-L_{1}+vt,L+vt\right)$ by taking this time the difference between (\ref{phx}%
) and (\ref{pht}), we obtain%
\begin{equation*}
\tilde{\phi}_{x}-\tilde{\phi}_{t}=\left\{ 
\begin{array}{ll}
\frac{2\pi i}{L}\gamma _{v}\left( 1-v\right) \displaystyle\sum_{n\in \mathbb{%
\ \mathbb{Z} }^{\ast }}nc_{n}e^{n\pi i\left( 1-v\right) \left( \left(
1+v\right) t+\gamma _{v}\left( vt-x\right) \right) /L}, & x\in \left(
-L_{1}+vt,vt\right) ,\smallskip \\ 
\frac{2\pi i}{L}\left( 1+v\right) \displaystyle\sum_{n\in \mathbb{\mathbb{Z} 
}^{\ast }}nc_{n}e^{n\pi i\left( 1+v\right) \left( t-x\right) /L}, & x\in
\left( vt,L+vt\right) .%
\end{array}%
\right.
\end{equation*}%
After few rearrangement, it follows that%
\begin{equation}
\tilde{\phi}_{x}-\tilde{\phi}_{t}=\frac{2\pi i}{L}\left( 1+v\right)
\sum_{n\in \mathbb{\mathbb{Z}}^{\ast }}nc_{n}e^{n\pi i\left( 1+v\right)
\left( t-x\right) /L},\text{ \ \ for\ }x\in \left( -L_{1}+vt,L+vt\right) .
\label{29b}
\end{equation}%
Since $\left\{ \sqrt{\frac{1+v}{2L}}e^{n\pi i\left( 1+v\right) \left(
t-x\right) /L}\right\} _{n\in \mathbb{\mathbb{Z}}}$ is an orthonormal basis
for $L^{2}\left( -L_{1}+vt,L+vt\right) $, we deduce that%
\begin{equation}
nc_{n}=\frac{1}{4\pi i}\int_{-L_{1}+vt}^{L+vt}\left( \tilde{\phi}_{x}-\tilde{%
\phi}_{t}\right) e^{-n\pi i\left( 1+v\right) \left( t-x\right) /L}dx\text{,
\ \ \ for\ }n\in \mathbb{\mathbb{Z}}^{\ast }.  \label{31}
\end{equation}%
For $t=0$, we obtain (\ref{cn-}) and the theorem follows.
\end{proof}

As a byproduct of the above proof, we have the following.

\begin{corollary}
\label{coroSll}Under the assumptions (\ref{tlike}) and (\ref{ic}), the sum $%
\sum_{n\in \mathbb{\mathbb{Z}}^{\ast }}\left\vert nc_{n}\right\vert ^{2}$ is
finite and is given by any of the two formulas, for $t\geq 0,$%
\begin{align}
\sum_{n\in \mathbb{\mathbb{Z}}^{\ast }}\left\vert nc_{n}\right\vert ^{2}& =%
\frac{L}{8\pi ^{2}\left( 1-v\right) }\int_{vt}^{L_{2}+vt}\left( \tilde{\phi}%
_{x}+\tilde{\phi}_{t}\right) ^{2}dx  \label{ncn+} \\
& =\frac{L}{8\pi ^{2}\left( 1+v\right) }\int_{-L_{1}+vt}^{L+vt}\left( \tilde{%
\phi}_{x}-\tilde{\phi}_{t}\right) ^{2}dx.  \label{ncn-}
\end{align}
\end{corollary}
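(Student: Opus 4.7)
The approach is to apply Parseval's identity directly to the two intermediate series expansions already derived in the proof of Theorem \ref{thexist1}. The heavy lifting --- extending $\phi_x$ and $\phi_t$ to the larger intervals $(-L_1+vt, vt)$ and $(L+vt, L_2+vt)$, and representing $\tilde{\phi}_x \pm \tilde{\phi}_t$ as trigonometric series against the two relevant orthonormal systems --- is already in place in equations (\ref{29}) and (\ref{29b}). What remains is essentially the Parseval isometry applied to each of them.

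Concretely, I would start from (\ref{29}) and rewrite it against the orthonormal basis $\{\sqrt{(1-v)/(2L)}\, e^{n\pi i(1-v)(t+x)/L}\}_{n\in\mathbb{Z}}$ of $L^2(vt, L_2+vt)$. Because the series (\ref{29}) runs only over $n\in\mathbb{Z}^*$, the Fourier coefficient of $\tilde{\phi}_x + \tilde{\phi}_t$ in this basis is a scalar multiple of $nc_n$ for $n\ne 0$ and vanishes for $n=0$. The factoring $\frac{1-v}{L}\sqrt{2L/(1-v)} = \sqrt{2(1-v)/L}$ makes the multiplier explicit, and Parseval's identity then delivers the squared $L^2$-norm as $\sum_{n\in\mathbb{Z}^*}|nc_n|^2$ times a constant equal to $8\pi^2(1-v)/L$; rearranging yields (\ref{ncn+}). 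The companion identity (\ref{ncn-}) comes out by the identical argument applied to (\ref{29b}) and the orthonormal basis $\{\sqrt{(1+v)/(2L)}\, e^{n\pi i(1+v)(t-x)/L}\}_{n\in\mathbb{Z}}$ of $L^2(-L_1+vt, L+vt)$.

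For the finiteness of $\sum_{n\in\mathbb{Z}^*}|nc_n|^2$, it suffices to observe that the two right-hand integrals are finite. From (\ref{solreg}) we have $\phi_x,\phi_t \in L^2(\mathbf{I}_t)$, and the extension formulas (\ref{phi0x+})--(\ref{phi1+}) are built from affine changes of variable with bounded Jacobians ($\gamma_v$ and $\gamma_v^{-1}$), so $\tilde{\phi}_x$ and $\tilde{\phi}_t$ lie in $L^2(-L_1+vt, L_2+vt)$. I do not expect a substantive obstacle here; the only step requiring genuine care is the bookkeeping of constants so that Parseval delivers the factor $L/(8\pi^2(1\mp v))$ in front of each integral, in particular tracking the normalisation $\sqrt{2L/(1\mp v)}$ that is needed to bring the expansions (\ref{29}) and (\ref{29b}) into orthonormal-basis form.
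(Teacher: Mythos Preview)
Your proposal is correct and follows essentially the same approach as the paper: apply Parseval's identity to the expansions (\ref{29}) and (\ref{29b}) (equivalently, to (\ref{30}) in orthonormal form) with respect to the orthonormal bases of $L^2(vt,L_2+vt)$ and $L^2(-L_1+vt,L+vt)$ already identified in the proof of Theorem~\ref{thexist1}. Your additional remark on finiteness via the bounded Jacobians of the extensions is a sound (and slightly more explicit) justification than the paper gives.
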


\begin{proof}
Parseval's equality applied to the function given in (\ref{30}) yields%
\begin{equation*}
\sum_{n\in \mathbb{\mathbb{Z} }^{\ast }}\left\vert nc_{n}\right\vert
^{2}=\left\vert \frac{1}{4\pi i}\sqrt{\frac{2L}{1-v}}\right\vert
^{2}\int_{vt}^{L_{2}+vt}\left( \tilde{\phi}_{x}+\tilde{\phi}_{t}\right)
^{2}dx\text{, \ \ \ for\ }t\geq 0.
\end{equation*}%
Thus (\ref{ncn+}) holds as claimed. The identity (\ref{ncn-}) follows from (%
\ref{31}) in a similar manner.
\end{proof}

\subsection{A numerical example}

To illustrate the above results, we compute the solution of (\ref{wave}) for
two values of speed $v=0.3$, $v=0.7$ and 
\begin{equation*}
L=\pi \ ,\phi ^{0}\left( x\right) =\sin \left( x\right) /10,\ \ \phi
^{1}\left( x\right) =0
\end{equation*}%
and use (\ref{ncn+}) for the first 40 frequencies, i.e. $\left\vert
n\right\vert \leq 40 $ in the series sum (\ref{exact0}). See Figures 4 and 5.

\begin{figure}[h]
\centering
\begin{minipage}[h]{0.48\textwidth}
    \includegraphics[width=\textwidth]{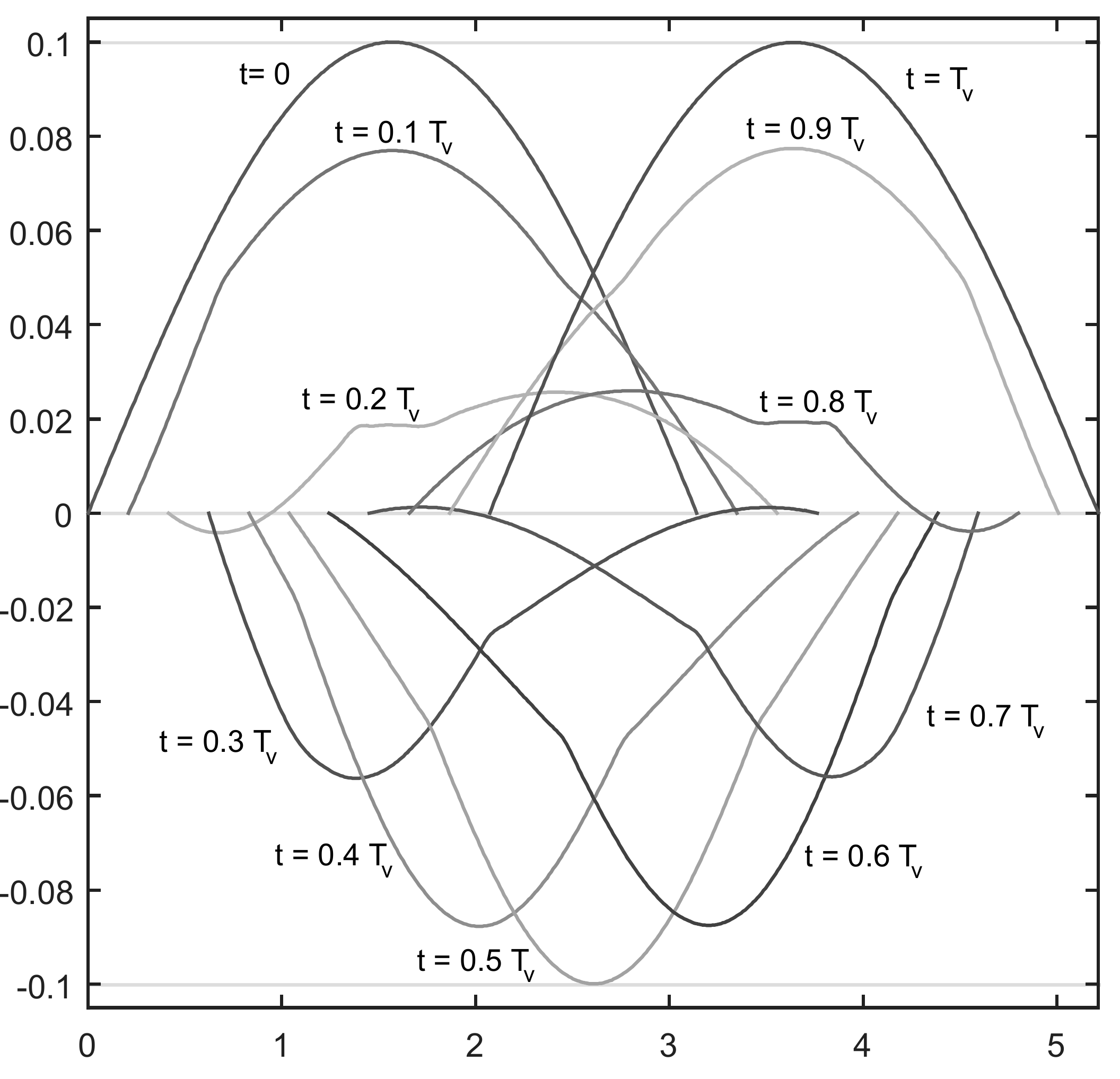}
    \caption{The solution $\protect\phi$ for $v=0.3$ in the interval $(vt,%
\protect\pi+vt)$ over one period $T_v \simeq 6.91$.}
  \end{minipage}
\hfill 
\begin{minipage}[h]{0.48\textwidth}
    \includegraphics[width=\textwidth]{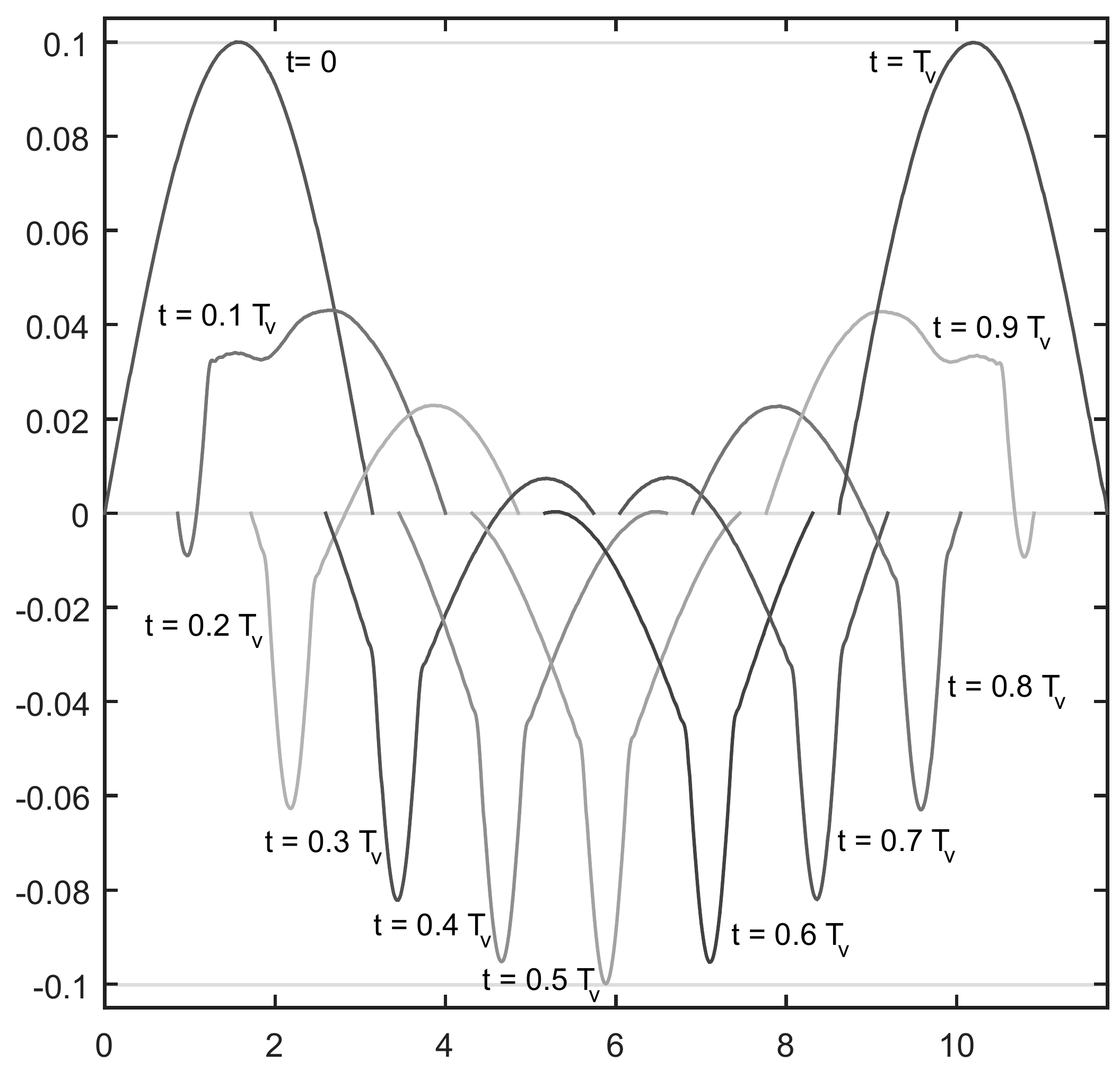}
    \caption{The solution $\protect\phi$ for $v=0.7$ in the interval $(vt,%
\protect\pi+vt)$ over one period $T_v \simeq 12.32$.}
  \end{minipage}
\end{figure}

\section{Energy expressions and estimates}

In this section, we show that the energy $\mathcal{E}_{v}\left( t\right) $
of the solution of Problem (\ref{wave}) is conserved in time.

\begin{theorem}
\label{th1}Under the assumptions (\ref{tlike}) and (\ref{ic}), the solution
of Problem (\ref{wave}) satisfies%
\begin{equation}
\mathcal{E}_{v}\left( t\right) =\frac{2\pi ^{2}\left( 1-v^{2}\right) }{L}%
\sum_{n\in \mathbb{\mathbb{Z}}^{\ast }}\left\vert nc_{n}\right\vert ^{2},\ \
\ \ \ \text{for}\ t\geq 0.  \label{est0}
\end{equation}%
$($the left-hand side is independent of $t).$
\end{theorem}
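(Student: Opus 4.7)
The plan is to read (\ref{est0}) off directly from (\ref{ncn+}) in Corollary \ref{coroSll}, after first diagonalizing the energy density along the two characteristic directions via the one-line algebraic identity
\[
(\phi_t + v\phi_x)^2 + (1-v^2)\phi_x^2 \;=\; \tfrac{1+v}{2}(\phi_t + \phi_x)^2 + \tfrac{1-v}{2}(\phi_t - \phi_x)^2.
\]
Writing $A(t):=\int_{vt}^{L+vt}(\phi_t+\phi_x)^2\,dx$ and $B(t):=\int_{vt}^{L+vt}(\phi_t-\phi_x)^2\,dx$, this gives immediately $\mathcal{E}_v(t)=\tfrac{1+v}{4}A(t)+\tfrac{1-v}{4}B(t)$, and the theorem reduces to showing that $A(t)+\tfrac{1-v}{1+v}B(t) = \tfrac{8\pi^2(1-v)}{L}\sum|nc_n|^2$.

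The key step is to evaluate the right-hand side of (\ref{ncn+}) in terms of $A$ and $B$. On $(vt,L+vt)$ the extension $\tilde\phi$ coincides with $\phi$, so that piece contributes $A(t)$. On $(L+vt,L_2+vt)$, formulas (\ref{phi0x+})--(\ref{phi1+}) yield $\tilde\phi_x+\tilde\phi_t=\gamma_v^{-1}(\phi_x-\phi_t)(y)$ with $y:=\gamma_v^{-1}(vt-x)+2L/(1+v)+vt$; this affine map sends $(L+vt,L_2+vt)$ bijectively onto $(vt,L+vt)$ with Jacobian $\gamma_v$, so a single change of variable produces $\gamma_v^{-1}B(t)=\tfrac{1-v}{1+v}B(t)$. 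Summing the two pieces and substituting into (\ref{ncn+}), after multiplication by $2\pi^2(1-v^2)/L$, recovers exactly $\tfrac{1+v}{4}A(t)+\tfrac{1-v}{4}B(t)=\mathcal{E}_v(t)$, which is (\ref{est0}). The parallel computation starting from (\ref{ncn-}), with the extension on $(-L_1+vt,vt)$ playing the symmetric role, yields the same identity with the roles of $A$ and $B$ swapped and serves as a consistency check.

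For the second, purely PDE, proof announced in the introduction, I would differentiate $\mathcal{E}_v(t)$ in $t$ directly. Differentiating the boundary identities $\phi(vt,t)\equiv 0$ and $\phi(L+vt,t)\equiv 0$ in $t$ forces $\phi_t+v\phi_x=0$ at both moving endpoints, which kills the $(\phi_t+v\phi_x)^2$ Leibniz contribution and leaves only the boundary term $\tfrac{v(1-v^2)}{2}[\phi_x^2]_{vt}^{L+vt}$. Using $\phi_{tt}=\phi_{xx}$, the bulk integrand rearranges as the pure divergence $\partial_x\bigl(\phi_t\phi_x+\tfrac{v}{2}\phi_t^2+\tfrac{v}{2}\phi_x^2\bigr)$, and evaluating this antiderivative at the endpoints (again using $\phi_t=-v\phi_x$ there) produces exactly the opposite quantity, so the two cancel and $\mathcal{E}_v'(t)\equiv 0$. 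The only real hurdle in either approach is bookkeeping: tracking the $\gamma_v^{\pm 1}$ factors from (\ref{phi0x+})--(\ref{phi1+}) together with the Jacobian of the affine change of variable in the first proof, and balancing the Leibniz boundary contribution against the integration-by-parts terms in the second.
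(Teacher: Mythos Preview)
Your proposal is correct and follows essentially the same route as the paper. The first argument is the paper's proof with one economical improvement: you first diagonalize the energy density via the characteristic identity $(\phi_t+v\phi_x)^2+(1-v^2)\phi_x^2=\tfrac{1+v}{2}(\phi_t+\phi_x)^2+\tfrac{1-v}{2}(\phi_t-\phi_x)^2$, which lets you conclude from (\ref{ncn+}) alone after the single change of variable on $(L+vt,L_2+vt)$; the paper instead invokes both (\ref{ncn+}) and (\ref{ncn-}), performs the change of variable on each extension piece, sums the two resulting identities, and only then expands $(\phi_x\pm\phi_t)^2$ to recognize $\mathcal{E}_v$---arriving at the same relation $\tfrac{1}{1-v}A(t)+\tfrac{1}{1+v}B(t)=\tfrac{8\pi^2}{L}\sum|nc_n|^2$ twice over. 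Your second proof is organized as a single divergence identity whereas the paper does two integrations by parts, but the substance (Leibniz terms, the boundary relation $\phi_t=-v\phi_x$, and the cancellation) is identical.
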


\begin{proof}
The two identities (\ref{ncn+}) and (\ref{ncn-}) implies that%
\begin{equation}
\frac{1}{1+v}\int_{-L_{1}+vt}^{L+vt}\left( \tilde{\phi}_{x}-\tilde{\phi}%
_{t}\right) ^{2}dx=\frac{1}{1-v}\int_{vt}^{L_{2}+vt}\left( \tilde{\phi}_{x}+%
\tilde{\phi}_{t}\right) ^{2}dx=\frac{8\pi ^{2}}{L}\sum_{n\in \mathbb{\ 
\mathbb{Z}}^{\ast }}\left\vert nc_{n}\right\vert ^{2}.  \label{32}
\end{equation}%
Using the extensions (\ref{phi0x+}), (\ref{phi1+}) and considering the
change of variable $x=\frac{1}{\gamma _{v}}\left( vt-\xi \right) +vt$, in $%
\left( -L_{1}+vt,vt\right) ,$ we obtain%
\begin{multline*}
\frac{1}{1+v}\int_{-L_{1}+vt}^{vt}\left( \tilde{\phi}_{x}\left( x,t\right) -%
\tilde{\phi}_{t}\left( x,t\right) \right) ^{2}dx=-\frac{1}{1+v}%
\int_{L+vt}^{vt}\gamma _{v}\left( \phi _{x}\left( \xi ,t\right) +\phi
_{t}\left( \xi ,t\right) \right) ^{2}d\xi \\
=\frac{1}{1-v}\int_{vt}^{L+vt}\left( \phi _{x}\left( \xi ,t\right) +\phi
_{t}\left( \xi ,t\right) \right) ^{2}d\xi .
\end{multline*}%
Taking $x=\gamma _{v}\left( vt-\xi \right) +\frac{2L}{v-1}+vt$, in $\left(
L+vt,L_{2}+vt\right) ,$ we obtain%
\begin{equation*}
\frac{1}{1-v}\int_{L+vt}^{L_{2}+vt}\left( \tilde{\phi}_{x}\left( x,t\right) +%
\tilde{\phi}_{t}\left( x,t\right) \right) ^{2}dx=\frac{1}{1+v}%
\int_{vt}^{L+vt}\left( \phi _{x}\left( \xi ,t\right) -\phi _{t}\left( \xi
,t\right) \right) ^{2}d\xi .
\end{equation*}%
Then, taking (\ref{32}) into account,\ it comes that 
\begin{multline*}
\frac{1}{1-v}\int_{-L_{1}+vt}^{L+vt}\left( \tilde{\phi}_{t}+\tilde{\phi}%
_{x}\right) ^{2}dx+\frac{1}{1+v}\int_{vt}^{L_{2}+vt}\left( \tilde{\phi}_{x}-%
\tilde{\phi}_{t}\right) ^{2}dx \\
=\frac{2}{1+v}\int_{vt}^{L+vt}\left( \phi _{x}-\phi _{t}\right) ^{2}dx+\frac{%
2}{1-v}\int_{vt}^{L+vt}\left( \phi _{t}+\phi _{x}\right) ^{2}dx=\frac{16\pi
^{2}}{L}\sum_{n\in \mathbb{\mathbb{Z}}^{\ast }}\left\vert nc_{n}\right\vert
^{2}.
\end{multline*}%
Expanding $\left( \phi _{x}\pm \phi _{t}\right) ^{2}$ and collecting similar
terms, we get 
\begin{equation}
\frac{1}{1-v^{2}}\left( 2\int_{vt}^{L+vt}\phi _{x}^{2}+\phi _{t}^{2}+4v\phi
_{x}\phi _{t}dx\right) =\frac{8\pi ^{2}}{L}\sum_{n\in \mathbb{\mathbb{Z}}%
^{\ast }}\left\vert nc_{n}\right\vert ^{2},\text{ \ \ for}\ t\geq 0.
\label{est1}
\end{equation}%
Recalling that $\mathcal{E}_{v}\left( t\right) $ is given by (\ref{E}), this
identity can be rewritten as in (\ref{est0}). This end the proof.
\end{proof}

The fact that $\mathcal{E}_{v}\left( t\right) $ is constant in time can be
established by using only the identities $\phi _{tt}=\phi _{xx}$ and $\phi
\left( vt,t\right) =\phi \left( L+vt,t\right) =0$ from (\ref{wave}).

\begin{proof}[A second proof for the conservation of $\mathcal{E}_{v}\left(
t\right) $]
It suffices to show that $\frac{d}{dt}\mathcal{E}_{v}\left( t\right) =0.$
First, the boundary conditions $\phi \left( vt,t\right) =\phi \left(
L+vt,t\right) =0$ means that $\frac{d}{dt}\phi \left( vt,t\right) =\frac{d}{%
dt}\phi \left( L+vt,t\right) =0,$ hence%
\begin{equation}
\phi _{t}\left( vt,t\right) +v\phi _{x}\left( vt,t\right) =\phi _{t}\left(
L+vt,t\right) +v\phi _{x}\left( L+vt,t\right) =0.  \label{dtotal=0}
\end{equation}%
Since the limits of the integral in the expression of $\mathcal{E}_{v}\left(
t\right) $ are time-dependent, then Leibnitz's rule implies that%
\begin{multline}
\frac{d}{dt}\mathcal{E}_{v}\left( t\right) =v\left( 1-v^{2}\right) \left(
\phi _{x}^{2}\left( L+vt,t\right) -\phi _{x}^{2}\left( vt,t\right) \right)
\label{DE} \\
+\int_{vt}^{L+vt}\frac{\partial }{\partial t}\left( \phi _{t}+v\phi
_{x}\right) ^{2}\ dx+\left( 1-v^{2}\right) \frac{\partial }{\partial t}%
\left( \phi _{x}^{2}\right) dx.
\end{multline}%
The remaining integral equals, after using $\phi _{tt}=\phi _{xx}\ $then
integrating by parts, 
\begin{multline*}
\int_{vt}^{L+vt}\left( \phi _{t}+v\phi _{x}\right) \phi _{xx}+\left( v\phi
_{t}+\phi _{x}\right) \phi _{xt}dx=\int_{vt}^{L+vt}-\left( \phi _{xt}+v\phi
_{xx}\right) \phi _{x}+\left( v\phi _{t}+\phi _{x}\right) \phi _{xt}dx \\
=v\int_{vt}^{L+vt}-\phi _{xx}\phi _{x}+\phi _{t}\phi _{xt}dx,
\end{multline*}%
which is nothing but%
\begin{equation*}
v\int_{vt}^{L+vt}\frac{\partial }{\partial x}\left( \phi _{t}^{2}-\phi
_{x}^{2}\right) dx=-v\left( 1-v^{2}\right) \left( \phi _{x}^{2}\left(
L+vt,t\right) -\phi _{x}^{2}\left( vt,t\right) \right)
\end{equation*}%
due to (\ref{dtotal=0}). Going back to (\ref{DE}), we infer that $\frac{d}{dt%
}\mathcal{E}_{v}\left( t\right) =0$ as claimed.
\end{proof}

Let us now compare $\mathcal{E}_{v}\left( t\right) $ to the usual expression
of energy for the wave equation 
\begin{equation*}
E_{v}\left( t\right) :=\frac{1}{2}\int_{vt}^{L+vt}\phi _{t}^{2}+\phi
_{x}^{2}dx,\text{ \ for}\ t\geq 0.
\end{equation*}%
In contrast with $\mathcal{E}_{v}\left( t\right) ,$ the expression $%
E_{v}\left( t\right) $ is not conserved in general. Due to the periodicity
relation (\ref{T-period}), we know at least that $E_{v}$ is $T_{v}-$periodic
in time. Moreover we have

\begin{corollary}
\label{coro3.1}Under the assumptions (\ref{tlike}) and (\ref{ic})\emph{,}
the energy $E_{v}\left( t\right) $ of the solution of Problem (\ref{wave})
satisfies%
\begin{equation}
\frac{\mathcal{E}_{v}\left( t\right) }{1+v}\leq E_{v}\left( t\right) \leq 
\frac{\mathcal{E}_{v}\left( t\right) }{1-v},\ \ \ \ \ \text{for}\ t\geq 0
\label{ES0}
\end{equation}%
and%
\begin{equation}
\frac{1}{\gamma _{v}}E_{v}\left( 0\right) \leq E_{v}\left( t\right) \leq
\gamma _{v}E_{v}\left( 0\right) ,\ \ \ \ \ \text{\ for }t\geq 0.
\label{stab}
\end{equation}
\end{corollary}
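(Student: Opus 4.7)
The plan is to prove (\ref{ES0}) by a pointwise comparison of the integrands of $\mathcal{E}_v(t)$ and $E_v(t)$, and then derive (\ref{stab}) by combining (\ref{ES0}) with the conservation of $\mathcal{E}_v$ established in Theorem \ref{th1}.

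First, I would expand the integrand of $\mathcal{E}_v(t)$:
\begin{equation*}
\left(\phi_t + v\phi_x\right)^2 + \left(1-v^2\right)\phi_x^2 = \phi_t^2 + 2v\phi_t\phi_x + \phi_x^2.
\end{equation*}
Comparing this with the integrand of $2E_v(t)$, namely $\phi_t^2 + \phi_x^2$, the key algebraic identities are
\begin{equation*}
\phi_t^2 + 2v\phi_t\phi_x + \phi_x^2 - (1-v)\left(\phi_t^2 + \phi_x^2\right) = v\left(\phi_t + \phi_x\right)^2 \geq 0
\end{equation*}
and
\begin{equation*}
(1+v)\left(\phi_t^2 + \phi_x^2\right) - \left(\phi_t^2 + 2v\phi_t\phi_x + \phi_x^2\right) = v\left(\phi_t - \phi_x\right)^2 \geq 0.
\end{equation*}
These two pointwise inequalities give $(1-v)(\phi_t^2+\phi_x^2) \leq 2\mathcal{E}_v\text{-integrand} \leq (1+v)(\phi_t^2+\phi_x^2)$, which integrate over $(vt,L+vt)$ to yield $(1-v)E_v(t) \leq \mathcal{E}_v(t) \leq (1+v)E_v(t)$, i.e., exactly (\ref{ES0}).

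For (\ref{stab}), I would apply (\ref{ES0}) at time $t$ and combine with the reverse inequality at time $0$, together with the fact that $\mathcal{E}_v(t) = \mathcal{E}_v(0)$ from Theorem \ref{th1}. This gives
\begin{equation*}
E_v(t) \leq \frac{\mathcal{E}_v(t)}{1-v} = \frac{\mathcal{E}_v(0)}{1-v} \leq \frac{1+v}{1-v}\,E_v(0) = \gamma_v E_v(0),
\end{equation*}
and symmetrically $E_v(t) \geq \mathcal{E}_v(t)/(1+v) = \mathcal{E}_v(0)/(1+v) \geq (1-v)E_v(0)/(1+v) = \gamma_v^{-1} E_v(0)$.

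I do not anticipate any real obstacle: the only non-trivial observation is spotting that the cross term $2v\phi_t\phi_x$ is controlled by completing the square against $v(\phi_t\pm\phi_x)^2$, which is clean. All other steps are direct integration and an application of the conservation law already proved.
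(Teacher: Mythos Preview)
Your proof is correct and follows essentially the same approach as the paper. The paper writes $\mathcal{E}_v(t)=E_v(t)+v\int_{vt}^{L+vt}\phi_x\phi_t\,dx$ and then applies $\pm\phi_x\phi_t\le\tfrac12(\phi_x^2+\phi_t^2)$, which is exactly your completed-square identities $v(\phi_t\pm\phi_x)^2\ge 0$ in disguise; the derivation of (\ref{stab}) from (\ref{ES0}) and the conservation of $\mathcal{E}_v$ is identical.
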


\begin{proof}
We can write (\ref{est1}) as 
\begin{equation}
E_{v}\left( t\right) +v\int_{vt}^{L+vt}\phi _{x}\phi _{t}\ dx=\mathcal{E}%
_{v}\left( t\right) ,\text{ \ \ for}\ t\geq 0.  \label{En3.8}
\end{equation}%
Thanks to the algebraic inequality $\pm ab\leq \left( a^{2}+b^{2}\right) /2$
we know that%
\begin{equation*}
\pm \int_{vt}^{L+vt}\phi _{x}\phi _{t}\ dx\leq E_{v}\left( t\right) \text{,
\ \ \ for }t\geq 0.
\end{equation*}%
Then, it comes that%
\begin{equation}
\mathcal{E}_{v}\left( t\right) \leq \left( 1+v\right) E_{v}\left( t\right) 
\text{ \ \ and \ \ }\left( 1-v\right) E_{v}\left( t\right) \leq \mathcal{E}%
_{v}\left( t\right) ,\text{ \ for }t\geq 0.  \label{ESES}
\end{equation}%
This implies (\ref{ES0}). Since (\ref{ESES}) holds also for $t=0$, then (\ref%
{stab}) follows by combining the two inequalities%
\begin{align*}
\left( 1-v\right) E_{v}\left( t\right) & \leq \mathcal{E}_{v}\left( t\right)
=\mathcal{E}_{v}\left( 0\right) \leq \left( 1+v\right) E_{v}\left( 0\right) ,
\\
\left( 1-v\right) E_{v}\left( 0\right) & \leq \mathcal{E}_{v}\left( 0\right)
=\mathcal{E}_{v}\left( t\right) \leq \left( 1+v\right) E_{v}\left( t\right) ,
\end{align*}%
for\ $t\geq 0$.
\end{proof}

\begin{remark}
The equality in estimation (\ref{ES0}) may hold for some $t\geq 0$. This is
the case whenever $\phi_{t}\left( x,t\right)=\pm \phi _{x}\left(x, t\right)$%
, for $x\in \mathbf{I}_t$ and some $t\geq 0$. For instance, if the initial
data satisfy $\phi ^{1}=\pm \phi _{x}^{0}$ we obtain from (\ref{En3.8}) that 
\begin{equation*}
\left( 1\pm v\right) E_{v}\left( 0\right) =E_{v}\left( 0\right)
+v\int_{0}^{L}\phi _{x}^{0}\phi ^{1}\ dx=\mathcal{E}_{v}\left( 0\right) ,
\end{equation*}%
i.e. $E_{v}\left( 0\right) =\mathcal{E}_{v}\left( 0\right) /\left( 1\pm
v\right) $. By periodicity, we have also $E_{v}\left( nT_{v}\right) =%
\mathcal{E}_{v}\left( 0\right) /\left( 1\pm v\right) ,$ for $n\in \mathbb{Z} 
$. The + and -- signs are used respectively.
\end{remark}

\begin{remark}
As $v\rightarrow 1^{-},$ we have $\mathcal{E}_{v}\left( 0\right) \rightarrow
\left\Vert \phi ^{1}+\phi _{x}^{0}\right\Vert _{L^{2}\left( 0,L\right) }/2.$
If the initial data satisfies $\phi ^{1}+\phi _{x}^{0}\neq 0,$ it follows
from (\ref{ES0}) that%
\begin{equation*}
E_{v}\left( t\right) \leq \frac{\mathcal{E}_{v}\left( t\right) }{1-v}=\frac{%
\mathcal{E}_{v}\left( 0\right) }{1-v}\rightarrow +\infty \text{, \ as }%
v\rightarrow 1^{-}.
\end{equation*}
Taking the precedent remark into account, we may have large value for $%
E_{v}\left( t\right)$, as $v$ becomes close to the speed of propagation $c=1$%
, even for small initial value $\mathcal{E}_{v}\left( 0\right)$. To see what
happens to the string in this case, let us take $v=0.9$ in the precedent
numerical example, see Figure \ref{fig09}. We observe a layer effect (i.e. a
subregion in $\mathbf{I}_t$ where $\phi _{x}$ becomes very large) that
travels from the left endpoint to the right one over one period $T_{v}$.
This phenomenon becomes more marked as $v$ is closer to $1$.
\end{remark}

\begin{figure}[tbph]
\centering
\includegraphics[width=0.57\textwidth,height=77mm]{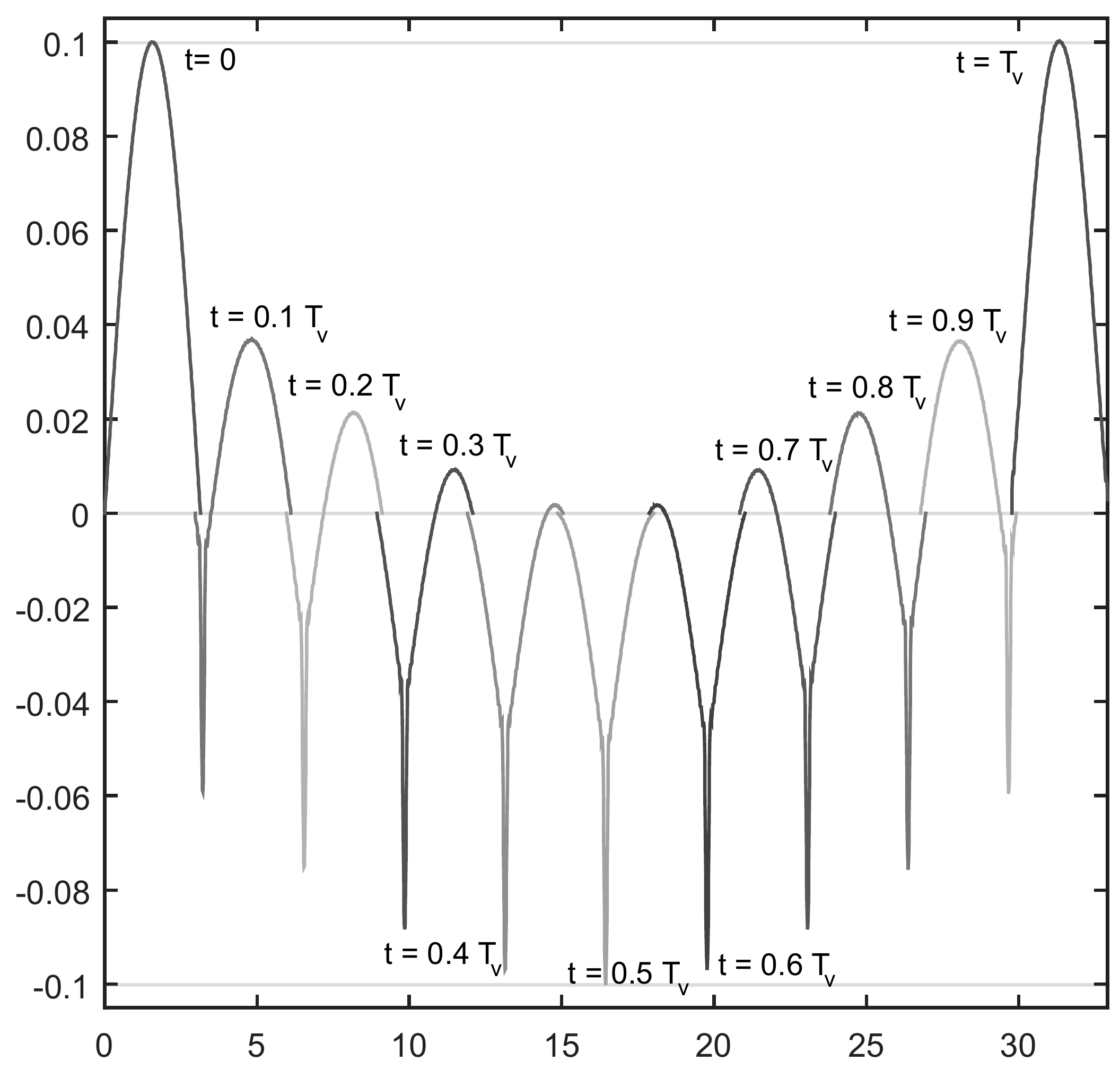}
\caption{The solution $\protect\phi$ for $v=0.9$ in the interval $(vt,%
\protect\pi+vt)$ over one period $T_v \simeq 33.07$.}
\label{fig09}
\end{figure}


\section{Boundary observability}

In many applications, it is preferred that the sensors do not interfere with
the vibrations of the string, so they are placed at the extremities. In
addition, interior pointwise sensors are difficult to design and the system
may become unobservable depending on the sensors location. This fact was
shown by Yang and Mote in \cite{YaMo1991} where they cast $\left( \ref{freez}%
\right) $ in a state space form and use semi-group theory.

\subsection{Observability at one endpoint}

First, we show the observability of (\ref{wave}) at each endpoint $x_{b}+vt$
where 
\begin{equation*}
x_{b}=0\ \ \text{or \ }x_{b}=L.
\end{equation*}%
The problem of observability considered here can be stated as follows: To
give sufficient conditions on the length $T$ of the time interval such that
there exists a constant $C(T)>0$ for which the observability inequality%
\footnote{%
One can replace $\mathcal{E}_{v}\left( 0\right) $ by $E_{v}\left( 0\right) $
in the left-hand side, but this does not matter since (\ref{ES0}) holds
under the assumption (\ref{tlike}).}%
\begin{equation}
\mathcal{E}_{v}\left( 0\right) \leq C(T)\int_{0}^{T}\phi
_{x}^{2}(x_{b}+vt,t)dt,  \label{ct}
\end{equation}%
holds for all the solutions of (\ref{wave}). This inequality is also called
the inverse inequality.

The next theorem shows in particular that the boundary observability holds
for $T\geq T_{v}=2L/\left( 1-v^{2}\right) $.

\begin{theorem}
\label{thobs1}Under the assumptions (\ref{tlike}) and (\ref{ic}), we have:%
\begin{equation}
\int_{0}^{MT_{v}}\phi _{x}^{2}(x_{b}+vt,t)dt=\frac{4M}{\left( 1-v^{2}\right)
^{2}}\mathcal{E}_{v}\left( 0\right) .  \label{33}
\end{equation}

By consequence, the solution of (\ref{wave}) satisfies the direct inequality%
\begin{equation}
\int_{0}^{T}\phi _{x}^{2}(x_{b}+vt,t)dt\leq K_{1}(v,T)\mathcal{E}_{v}\left(
0\right) \text{, for every }T\geq 0,  \label{D1}
\end{equation}%
with a constant $K_{1}(v,T)$ depending only on $v$ and $T$.

If $T\geq T_{v}$, Problem (\ref{wave}) is observable at $\xi \left( t\right)
=x_{b}+vt$ and it holds that:%
\begin{equation}
\mathcal{E}_{v}\left( 0\right) \leq \frac{\left( 1-v^{2}\right) ^{2}}{4}%
\int_{0}^{T}\phi _{x}^{2}(x_{b}+vt,t)dt.  \label{obs1}
\end{equation}
\end{theorem}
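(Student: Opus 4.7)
The plan is to reduce the boundary trace $\phi_x(x_b+vt,t)$ to a pure Fourier series in $t$ whose frequencies are commensurate with the period $T_v$, and then apply Parseval's identity on one period, using Theorem \ref{th1} to identify the resulting sum with $\mathcal{E}_v(0)$.

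First I would substitute $x=x_b+vt$ into the term-by-term derivative formula (\ref{ph x}). At $x_b=0$ we have $t+x=(1+v)t$ and $t-x=(1-v)t$, so the two families of exponentials collapse to a single one, giving
\begin{equation*}
\phi_x(vt,t)=\frac{2\pi i}{L}\sum_{n\in\mathbb{Z}^{\ast}} nc_n\, e^{n\pi i(1-v^2)t/L}.
\end{equation*}
At $x_b=L$ a similar computation yields
\begin{equation*}
\phi_x(L+vt,t)=\frac{2\pi i}{L}\sum_{n\in\mathbb{Z}^{\ast}}(-1)^n e^{-n\pi i v}\, nc_n\, e^{n\pi i(1-v^2)t/L},
\end{equation*}
since $e^{n\pi i(1-v)}=e^{-n\pi i(1+v)}=(-1)^n e^{-n\pi i v}$. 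In both cases the modulus of the $n$-th Fourier coefficient is exactly $\tfrac{2\pi}{L}|nc_n|$.

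Next, the system $\{T_v^{-1/2}\,e^{n\pi i(1-v^2)t/L}\}_{n\in\mathbb{Z}}$ is an orthonormal basis of $L^2(0,T_v)$ since $T_v=2L/(1-v^2)$. Parseval's identity therefore gives
\begin{equation*}
\int_0^{T_v}\phi_x^2(x_b+vt,t)\,dt=\frac{4\pi^2}{L^2}\,T_v\sum_{n\in\mathbb{Z}^{\ast}}|nc_n|^2=\frac{8\pi^2}{L(1-v^2)}\sum_{n\in\mathbb{Z}^{\ast}}|nc_n|^2.
\end{equation*}
By Theorem \ref{th1} we have $\sum_{n\in\mathbb{Z}^{\ast}}|nc_n|^2=\tfrac{L}{2\pi^2(1-v^2)}\mathcal{E}_v(0)$, so the right-hand side equals $\tfrac{4}{(1-v^2)^2}\mathcal{E}_v(0)$, which is (\ref{33}) for $M=1$.

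To pass from one period to $M$ periods, I would invoke the time-periodicity (\ref{T-period}): it implies that the map $t\mapsto \phi_x(x_b+vt,t)$ is itself $T_v$-periodic, so the integral over $(0,MT_v)$ is simply $M$ times the integral over $(0,T_v)$, yielding (\ref{33}). The direct inequality (\ref{D1}) then follows by bracketing $T$ between two consecutive multiples of $T_v$: with $M=\lceil T/T_v\rceil$, the integrand being nonnegative gives $\int_0^T\leq\int_0^{MT_v}=\tfrac{4M}{(1-v^2)^2}\mathcal{E}_v(0)$, so one may take $K_1(v,T)=\tfrac{4\lceil T/T_v\rceil}{(1-v^2)^2}$. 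Finally, the observability inequality (\ref{obs1}) is immediate: for $T\geq T_v$, nonnegativity of the integrand and the case $M=1$ of (\ref{33}) yield $\int_0^T\phi_x^2(x_b+vt,t)\,dt\geq \tfrac{4}{(1-v^2)^2}\mathcal{E}_v(0)$.

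I do not expect a serious obstacle: the only delicate step is the algebraic collapse of the two exponential families on the boundary, especially at $x_b=L$ where one must verify that the phases $(-1)^n e^{-n\pi i v}$ are unimodular so that Parseval's identity gives the same sum $\sum|nc_n|^2$ as at $x_b=0$. Everything else is a direct application of orthogonality, Theorem \ref{th1}, and the periodicity relation (\ref{T-period}).
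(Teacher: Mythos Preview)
Your argument is correct and follows essentially the same route as the paper: evaluate (\ref{ph x}) on the boundary to collapse the two exponential families into a single $T_v$-periodic Fourier series, apply Parseval on one period, and invoke (\ref{est0}) to identify $\sum|nc_n|^2$ with $\mathcal{E}_v(0)$. The only cosmetic difference is that the paper obtains (\ref{33}) by applying Parseval separately on each subinterval $(mT_v,(m+1)T_v)$ and summing, whereas you compute the $M=1$ case first and then invoke the $T_v$-periodicity (\ref{T-period}) of the boundary trace; both are equivalent.
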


\begin{proof}
Thanks to (\ref{ph x}), we can evaluate $\phi _{x}$ at the endpoint $%
x=x_{b}+vt$ . We obtain%
\begin{align*}
\phi _{x}(x_{b}+vt,t)& =\frac{\pi i}{L}\sum_{n\in \mathbb{\mathbb{Z}}^{\ast
}}nc_{n}\left( \left( 1-v\right) e^{\frac{n\pi i\left( 1-v\right) }{L}\left(
\left( 1+v\right) t+x_{b}\right) }+\left( 1+v\right) e^{\frac{n\pi i\left(
1+v\right) }{L}\left( \left( 1-v\right) t-x_{b}\right) }\right) \\
& =\frac{\pi i}{L}\sum_{n\in \mathbb{\mathbb{Z}}^{\ast }}nc_{n}\left( \left(
1-v\right) e^{\frac{n\pi i\left( 1-v\right) }{L}x_{b}}+\left( 1+v\right) e^{-%
\frac{n\pi i\left( 1+v\right) }{L}x_{b}}\right) e^{n\pi i\left(
1-v^{2}\right) t/L},
\end{align*}%
which can be rewritten as%
\begin{equation}
\phi _{x}(x_{b}+vt,t)=\left\{ 
\begin{array}{ll}
\displaystyle\frac{2\pi i}{L}\sum_{n\in \mathbb{\mathbb{Z}}^{\ast
}}nc_{n}e^{2n\pi it/T_{v}}\smallskip , & \text{if }x_{b}=0, \\ 
\displaystyle\frac{2\pi i}{L}\sum_{n\in \mathbb{\mathbb{Z}}^{\ast
}}nc_{n}e^{-n\pi i\left( 1+v\right) }e^{2n\pi it/T_{v}}, & \text{if }x_{b}=L.%
\end{array}%
\right. .  \label{phix0}
\end{equation}%
Let $M\in \mathbb{N}^{\ast }$. Since the set of functions $\left\{ e^{2n\pi
it/T_{v}}/\sqrt{T_{v}}\right\} _{n\in \mathbb{\mathbb{Z}}}$ is complete and
orthonormal in the space $L^{2}(mT_{v},\left( m+1\right) T_{v})$ for $%
m=0,...,M-1$, then Parseval's equality applied to the functions%
\begin{equation*}
\phi _{x}(x_{b}+vt,t)\in L^{2}(mT_{v},\left( m+1\right) T_{v}),\text{ \ for }%
m=0,...,M-1,
\end{equation*}%
yields, after summing up the integrals for all the subintervals of $\left(
0,MT_{v}\right) ,$%
\begin{equation*}
\frac{1}{T_{v}}\int_{0}^{MT_{v}}\phi _{x}^{2}(x_{b}+vt,t)dt=\frac{4M\pi ^{2}%
}{L^{2}}\sum_{n\in \mathbb{\mathbb{Z}}^{\ast }}\left\vert nc_{n}\right\vert
^{2}
\end{equation*}%
and (\ref{33}) follows.

For every $T\geq 0$, we can take the integer $M$ large enough to satisfy $%
MT_{v}=M\frac{2L}{1-v^{2}}\geq T$. Then, the identity (\ref{33}) yields 
\begin{equation*}
\int_{0}^{T}\phi _{x}^{2}(x_{b}+vt,t)dt\leq \int_{0}^{MT_{v}}\phi
_{x}^{2}(x_{b}+vt,t)dt=\frac{4M}{\left( 1-v^{2}\right)^2}\mathcal{E}%
_{v}\left( 0\right),
\end{equation*}%
i.e., (\ref{D1}) holds for $K_{1}(v,T):=4M/\left(1-v^{2}\right)^2$. The
inequality (\ref{obs1}) follows from (\ref{33}) with $M=1.$
\end{proof}

\begin{remark}
\label{rmk dt}Taking (\ref{dtotal=0}) into account, we have%
\begin{equation*}
\phi _{t}^{2}(x_{b}+vt,t)=v^{2}\phi _{x}^{2}(x_{b}+vt,t),\text{ \ \ \ for }%
x_{b}=0\ \text{or }x_{b}=L,\ \forall t\geq 0.
\end{equation*}
Then, the results of Theorem \ref{thobs1} hold if we replace $\phi
_{x}(x_{b}+vt,t)$ by $\phi _{t}(x_{b}+vt,t)/v^{2}$ with the same constants
in the inequalities.
\end{remark}

\begin{remark}
The time of boundary observability $T_{v}$ can be predicted by a simple
argument, see Figure $\ref{fig4}$. An initial disturbances concentrated near 
$x=L+vt$ may propagate to the left as $t$ increases. It reaches the left
boundary, when $t$ is close to $\frac{L}{1+v}$. Then travels back to reach
the right boundary when $t$ is close to $\frac{2L}{1-v^{2}}=T_{v}$, see
Figure \ref{fig4} (left). We need the same time $T_{v}$ for an initial
disturbance concentrated near $x=vt$, see Figure \ref{fig4} (right).
\end{remark}

\begin{figure}[tbph]
\centering\includegraphics[width=0.71\textwidth,height=57mm]{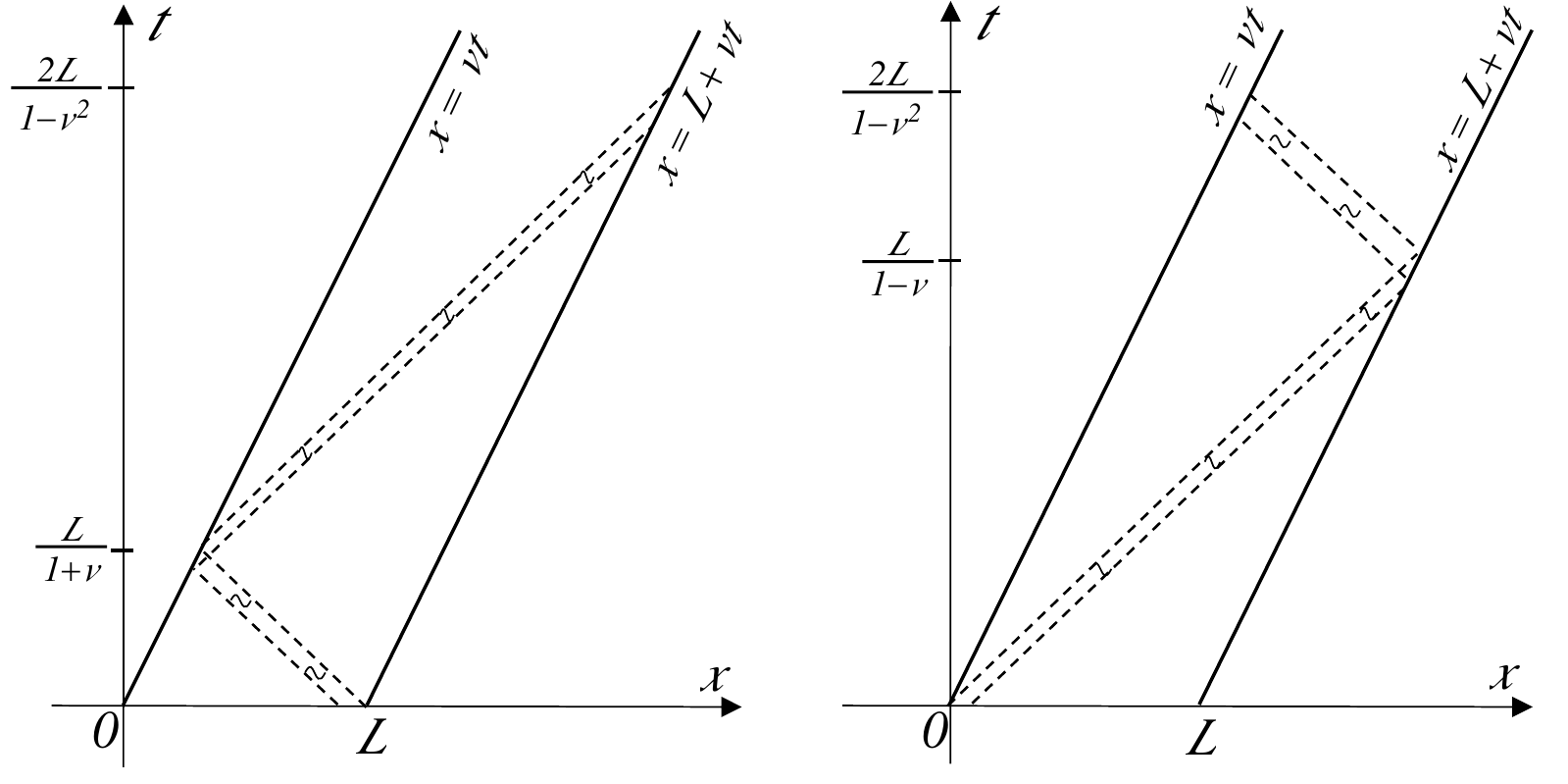}
\caption{Propagation of small disturbances with support near an endpoint.}
\label{fig4}
\end{figure}

\subsection{Observability at both endpoints}

Place two sensors at both endpoints $x=vt$ and $x=L+vt$ of the interval $%
\mathbf{I}_{t}$, one expects a shorter time of observability. The next
theorem shows that the observability, in this case, holds for $T\geq \tilde{T%
}_{v}=L/\left( 1-v\right) $.

\begin{theorem}
\label{thobs2}Under the assumption (\ref{tlike}) and (\ref{ic}),\emph{\ }we
have: 
\begin{equation}
\int_{0}^{\frac{L}{1+v}}\phi _{x}^{2}(vt,t)dt+\int_{0}^{\frac{L}{1-v}}\phi
_{x}^{2}(L+vt,t)dt=\frac{4\mathcal{E}_{v}\left( 0\right)}{\left(
1-v^{2}\right)^{2}} .  \label{slm2}
\end{equation}%
By consequence, the solution of (\ref{wave}) satisfies the direct inequality%
\begin{equation}
\int_{0}^{T}\phi _{x}^{2}(vt,t)+\phi _{x}^{2}(L+vt,t)dt\leq K_{2}(v,T)%
\mathcal{E}_{v}\left( 0\right) \text{, for every }T\geq 0,  \label{D2}
\end{equation}%
with a constant $K_{2}(v,T)$ depending only on $v$ and $T$.

If $T\geq \tilde{T}_{v}$, Problem (\ref{wave}) is observable at both
endpoints $x=vt,x=L+vt$ and it holds that%
\begin{equation}
\mathcal{E}_{v}\left( 0\right) \leq \frac{\left( 1-v^{2}\right) ^{2}}{4}%
\int_{0}^{T}\phi _{x}^{2}(vt,t)+\phi _{x}^{2}(L+vt,t)dt.  \label{obs3}
\end{equation}
\end{theorem}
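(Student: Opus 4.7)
The plan is to use the explicit series representations (\ref{phix0}) derived in the proof of Theorem \ref{thobs1} and to combine the two boundary traces in such a way that the off-diagonal (cross) terms cancel even though, individually, neither sub-interval $(0,L/(1+v))$ nor $(0,L/(1-v))$ is long enough for the exponentials $\{e^{n\pi i(1-v^{2})t/L}\}_{n\in\mathbb{Z}}$ to be orthogonal.

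First I would square $\phi_{x}(vt,t)$ and $\phi_{x}(L+vt,t)$, using that they are real, and integrate on $(0,L/(1+v))$ and $(0,L/(1-v))$ respectively. Writing out the double sums and integrating, a diagonal contribution $n=m$ appears, contributing $|nc_{n}|^{2}$ times the length of the respective sub-interval. Since
\[
\frac{L}{1+v}+\frac{L}{1-v}=\frac{2L}{1-v^{2}}=T_{v},
\]
the two diagonal contributions add up to $T_{v}\sum_{n\in\mathbb{Z}^{\ast}}|nc_{n}|^{2}$, which, combined with Theorem \ref{th1} and the computation $\sum|nc_{n}|^{2}=\dfrac{L}{2\pi^{2}(1-v^{2})}\mathcal{E}_{v}(0)$ already used to derive (\ref{33}), yields precisely $\dfrac{4}{(1-v^{2})^{2}}\mathcal{E}_{v}(0)$.

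The main obstacle is showing that the off-diagonal contributions vanish. For $n\neq m$, set $k=n-m$ and compute the two elementary exponential integrals
\[
\int_{0}^{L/(1+v)}e^{ik\pi(1-v^{2})t/L}\,dt=\frac{L\bigl((-1)^{k}e^{-ik\pi v}-1\bigr)}{ik\pi(1-v^{2})},
\qquad
\int_{0}^{L/(1-v)}e^{ik\pi(1-v^{2})t/L}\,dt=\frac{L\bigl((-1)^{k}e^{ik\pi v}-1\bigr)}{ik\pi(1-v^{2})}.
\]
From (\ref{phix0}), the coefficient of $c_{n}\overline{c_{m}}$ at the boundary $x_{b}=L$ carries the extra phase $(-1)^{n-m}e^{-(n-m)\pi iv}=(-1)^{k}e^{-ik\pi v}$ compared with the one at $x_{b}=0$. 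A direct algebraic check then gives
\[
\bigl((-1)^{k}e^{-ik\pi v}-1\bigr)+(-1)^{k}e^{-ik\pi v}\bigl((-1)^{k}e^{ik\pi v}-1\bigr)=0,
\]
so the cross terms cancel pairwise. This is the crux of the argument, and it is precisely the choice of the two sub-interval lengths $L/(1+v)$ and $L/(1-v)$ that makes the cancellation exact.

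Once (\ref{slm2}) is established, the direct inequality (\ref{D2}) follows at once from (\ref{D1}) applied at each of the two endpoints, with constant $K_{2}(v,T)=2K_{1}(v,T)$. Finally, for $T\geq\tilde{T}_{v}=L/(1-v)$ we have $T\geq L/(1+v)$ as well, so each of the two integrals in (\ref{slm2}) is dominated by the corresponding integral from $0$ to $T$, and (\ref{obs3}) follows immediately.
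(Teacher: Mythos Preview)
Your proof plan is correct and follows essentially the same route as the paper: both use the series representation (\ref{phix0}) at the two endpoints, integrate the resulting double sums over $(0,L/(1+v))$ and $(0,L/(1-v))$ respectively, and verify that the off-diagonal terms cancel pairwise while the diagonal terms combine to give $\dfrac{4}{(1-v^{2})^{2}}\mathcal{E}_{v}(0)$. The paper organizes the computation slightly differently---multiplying $\phi_{x}$ by a single mode $\overline{imc_{m}e^{2m\pi it/T_{v}}}$ and summing over $m$ at the end, and prefacing the argument with a density reduction to smooth data to justify interchanging sums and integrals---but the key cancellation identity and the derivations of (\ref{D2}) and (\ref{obs3}) are the same as yours.
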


\begin{proof}
Arguing by density as in \cite{Seng2020}, it suffices to establish (\ref%
{slm2}) for smooth initial data. Thus, assuming that $\phi _{x}^{0}$ and $%
\phi ^{1}$ are continuous functions ensures in particular that their Fourier
series are absolutely converging. This allow us to to interchange summation
and integration in the infinite series considered in the remainder of the
proof.

Let $m\in \mathbb{Z}^{\ast }$. On one hand, taking $x_{b}=0$ in (\ref{phix0}%
), multiplying by $\overline{imc_{m}e^{2m\pi it/T_{v}}}$ then integrating on 
$\left( 0,L/\left( 1+v\right) \right) ,$ we obtain 
\begin{equation*}
\int_{0}^{\frac{L}{1+v}}\phi _{x}(vt,t)\text{ }\overline{imc_{m}e^{2m\pi
it/T_{v}}}dt=\frac{2\pi }{L}m\bar{c}_{m}\int_{0}^{\frac{L}{1+v}}\left(
\sum_{n\in \mathbb{\mathbb{Z}}^{\ast }}nc_{n}e^{2\left( n-m\right) \pi
it/T_{v}}\right) dt.
\end{equation*}%
Integrating term-by-term, we obtain%
\begin{multline}
\int_{0}^{\frac{L}{1+v}}\phi _{x}(vt,t)\text{ }\overline{imc_{m}e^{2m\pi
it/T_{v}}}dt=\frac{2\pi }{L}\sum_{n\in \mathbb{\mathbb{Z}}^{\ast }}nmc_{n}%
\bar{c}_{m}\int_{0}^{\frac{L}{1+v}}e^{2\left( n-m\right) \pi it/T_{v}}dt
\label{Anm} \\
=\sum_{n\in \mathbb{\mathbb{Z}}^{\ast }}A_{nm},
\end{multline}%
where%
\begin{equation*}
A_{nm}=\left\{ 
\begin{array}{ll}
\displaystyle\frac{2\pi }{1+v}\left\vert mc_{m}\right\vert ^{2},\medskip  & 
\text{ \ if }n=m, \\ 
\displaystyle\frac{2nmc_{n}\bar{c}_{m}}{i\left( n-m\right) \left(
1-v^{2}\right) }\left( e^{\pi i\left( n-m\right) \left( 1-v\right)
}-1\right) , & \text{ \ if }n\neq m.%
\end{array}%
\right. 
\end{equation*}

On the other hand, taking $x_{b}=L$ in the identity (\ref{phix0}),
multiplying by $\overline{imc_{m}e^{-m\pi i\left( 1+v\right) }e^{2m\pi
it/T_{v}}}$, then integrating term-by-term on $\left( 0,L/\left( 1-v\right)
\right) $, we end up with%
\begin{equation}
\int_{0}^{\frac{L}{1-v}}\phi _{x}(L+vt,t)\text{ }\overline{imc_{m}e^{-m\pi
i\left( 1+v\right) }e^{2m\pi it/T_{v}}}dt=\sum_{n\in \mathbb{\mathbb{Z}}%
^{\ast }}B_{nm},  \label{Bnm}
\end{equation}%
where%
\begin{equation*}
B_{nm}=\left\{ 
\begin{array}{ll}
\displaystyle\frac{2\pi }{1-v}\left\vert mc_{m}\right\vert ^{2},\medskip & 
\text{ \ if }n=m, \\ 
\displaystyle\frac{2nmc_{n}\bar{c}_{m}}{i\left( n-m\right) \left(
1-v^{2}\right) }\left( 1-e^{-\left( n-m\right) \pi i\left( 1+v\right)
}\right) , & \text{ \ if }n\neq m.%
\end{array}%
\right.
\end{equation*}%
Computing $A_{nm}+B_{nm}$ we obtain:

\begin{itemize}
\item If $n=m,$ then 
\begin{equation*}
A_{mm}+B_{mm}=2\pi \left\vert mc_{m}\right\vert ^{2}\left( \frac{1}{1+v}+%
\frac{1}{1-v}\right) =\frac{4\pi }{1-v^{2}}\left\vert mc_{m}\right\vert ^{2}.
\end{equation*}

\item If $n\neq m,$ then%
\begin{align*}
A_{nm}+B_{nm}& =\frac{2nmc_{n}\bar{c}_{m}}{i\left( n-m\right) \left(
1-v^{2}\right) }\left( e^{\pi i\left( n-m\right) \left( 1-v\right)
}-e^{-\left( n-m\right) \pi i\left( 1+v\right) }\right) \\
& =\frac{2nmc_{n}\bar{c}_{m}}{i\left( n-m\right) \left( 1-v^{2}\right) }%
e^{-\pi i\left( n-m\right) \left( 1-v\right) }\left( e^{\left( n-m\right)
\pi i\left( 1-v+1+v\right) }-1\right) ,
\end{align*}

i.e., $A_{nm}+B_{nm}=0\text{ if }n\neq m.$
\end{itemize}

By consequence, the sum of (\ref{Anm}) and (\ref{Bnm}) is simply given by%
\begin{multline}
\int_{0}^{\frac{L}{1+v}}\phi _{x}(vt,t)\overline{imc_{m}e^{2m\pi it/T_{v}}}dt
\label{A+B} \\
+\int_{0}^{\frac{L}{1-v}}\phi _{x}(L+vt,t)\text{ }\overline{imc_{m}e^{-m\pi
i\left( 1+v\right) }e^{2m\pi it/T_{v}}}dt=\frac{4\pi }{1-v^{2}}\left\vert
mc_{m}\right\vert ^{2},
\end{multline}%
for every $m\in \mathbb{Z}^{\ast }$. Taking the sum for $m\in \mathbb{\ 
\mathbb{Z}}^{\ast }$, and interchange summation and integration, it comes
that%
\begin{multline*}
\int_{0}^{\frac{L}{1+v}}\phi _{x}(vt,t)\left( \sum\limits_{m=-\infty
}^{+\infty }\overline{imc_{m}e^{2m\pi it/T_{v}}}\right) dt \\
+\int_{0}^{\frac{L}{1-v}}\phi _{x}(L+vt,t)\left( \sum\limits_{m=-\infty
}^{+\infty }\overline{imc_{m}e^{-m\pi i\left( 1+v\right) }e^{2m\pi it/T_{v}}}%
\right) dt=\frac{4\pi }{1-v^{2}}\sum\limits_{m=-\infty }^{+\infty
}\left\vert mc_{m}\right\vert ^{2}.
\end{multline*}%
Thanks to (\ref{phix0}), we obtain%
\begin{equation*}
\frac{L}{2\pi }\left( \int_{0}^{\frac{L}{1+v}}\phi
_{x}^{2}(vt,t)dt+\int_{0}^{\frac{L}{1-v}}\phi _{x}^{2}(L+vt,t)dt\right) =%
\frac{4\pi }{1-v^{2}}\sum\limits_{m=-\infty }^{+\infty }\left\vert
mc_{m}\right\vert ^{2}.
\end{equation*}%
This shows (\ref{slm2}).

Inequality (\ref{D2}) is a consequence of Theorem \ref{thobs1}, it suffices
to choose $x_{b}=vt$ then $x_{b}=L+vt$ in the direct inequality (\ref{D1})
and take the sum. The inequality (\ref{obs3}) holds for $T=\max \left\{ 
\frac{L}{1-v},\frac{L}{1+v}\right\} =\tilde{T}_{v}$ and therefore for every $%
T\geq \tilde{T}_{v}$ as well.
\end{proof}

\begin{remark}
If $T<\frac{L}{1-v}$, then the observability does not hold. Indeed, an
initial disturbance with sufficiently small support and close to $x=0$ will
hit the boundary $x=L+vt$ only after the time $T$, see Figure $\ref{fig4}$
(Right).
\end{remark}

\begin{remark}
Thanks to the Hilbert uniqueness method (HUM), due to J-.L. Lions \cite%
{Lion1988}, we can easily derive exact boundary controllability results at
one or at both endpoints from the above observability results. The proof is
not much different from that in \cite{Seng2020}.
\end{remark}

\begin{remark}
The techniques used in this paper can be adapted to deal with more
complicated boundary conditions for travelling strings. The results will
appear in a forthcoming paper.
\end{remark}

\subsection*{Acknowledgements}

The authors have been supported by the General Direction of Scientific
Research and Technological Development (Algerian Ministry of Higher
Education and Scientific Research) PRFU \# C00L03UN280120220010. They are
very grateful to this institution.

\subsection*{ORCID}

Abdelmouhcene Sengouga \href{https://orcid.org/0000-0003-3183-7973}{%
https://orcid.org/0000-0003-3183-7973}

\end{document}